\pgfplotsset{compat=1.14}
\setlist{leftmargin=*}
\crefname{hypothesis}{Hypothesis}{Hypotheses}
\newcommand{\mute}[1]{}
\colorlet{RED}{red}
\title{Geometry of Graph Partitions via Optimal Transport\thanks{\mute{Submitted to the editors October, 2019. }This work was initiated during the summer 2019 Voting Rights Data Institute in a team project on distances between partitions (faculty lead: Justin Solomon); authors are in alphabetical order by last name.  Additional team members are listed in the acknowledgements.  The corresponding author is Tara Abrishami.
}}
\author{
Tara Abrishami%
\thanks{Princeton University, Princeton, NJ (\email{taraa@princeton.edu})}%
\and
Nestor Guillen%
\thanks{Texas State University, San Marcos, TX (\email{nestor@txstate.edu})}%
\and
Parker Rule%
\thanks{Tufts University, Medford, MA (\email{parker.rule@tufts.edu})}%
\and
Zachary Schutzman%
\thanks{University of Pennsylvania, Philadelphia, PA (\email{ianzach@seas.upenn.edu})}%
\and
Justin Solomon%
\thanks{Massachusetts Institute of Technology, Cambridge, MA (\email{jsolomon@mit.edu})}%
\and
Thomas Weighill%
\thanks{Tufts University, Medford, MA (\email{thomas.weighill@tufts.edu})}%
\and
Si Wu%
\thanks{Northeastern University, Boston, MA (\email{wu.si1@husky.neu.edu})}%
}
\newcommand{\R}{\mathbb{R}}
\newcommand{\1}{\mathbbm{1}}
\newcommand{\Prob}{\text{Prob}}
\newcommand{\M}{\text{M}}
\newcommand{\DS}{\mathrm{DS}}
\newcommand{\st}{\textrm{subject to}}
\newcommand{\diam}{\text{diam}}
\DeclareMathOperator*{\argmin}{argmin}
\begin{document}

\maketitle

\begin{abstract}
We define a distance metric between partitions of a graph using machinery from optimal transport.  Our metric is built from a linear assignment problem that matches partition components, with assignment cost proportional to transport distance over graph edges.  We show that our distance can be computed using a single linear program without precomputing pairwise assignment costs and derive several theoretical properties of the metric. 
Finally, we provide experiments demonstrating these properties empirically, specifically focusing on its value for new problems in ensemble-based analysis of political districting plans.
\end{abstract}

\begin{keywords}
  partitions, optimal transport, network flows, convex optimization
\end{keywords}

\begin{AMS}
  65K10, 90B06, 05C21
\end{AMS}

\section{Introduction}

Several mathematical and computational problems involve collections of graph partitions with fixed numbers of components.  Example application scenarios include tracking and clustering of evolving communities in a network, as well as analysis of political redistricting plan ensembles---an application we will study in detail below.  Because specifying a single partition requires a label for every vertex, 
however, it can be difficult to visualize and navigate such a collection.  Additionally, because the number of possible partitions typically exponentiates in the size of the underlying graph, collections of partitions usually are extremely large.

Enriching the set of partitions with a geometric structure provides a means of understanding the vast space 
of partitions. In particular, a distance between partitions allows us to quantify the qualitative notion of similarity or difference between two partitions and can provide insight into the structure of the space of partitions as a whole.  
For instance, a large family of partitions might be close in a given metric to elements in a smaller one; in this case, we can efficiently infer information about the larger collection from a small representative subsample. 
More broadly, a distance metric can evaluate whether a given sample of partitions spreads over the set of partitions or concentrates in a smaller region. A metric also yields a visualization tool: Given a finite family of partitions, one may compute pairwise distances and use them with an embedding algorithm  
to create a two- or three-dimensional Euclidean visualization.

Motivated by the challenges above, we present a distance on the space of graph partitions motivated by optimal transport.  Our model uses transport to measure pairwise relationships between the components of two partitions; a linear assignment problem then extracts the minimum cost (perfect) matching between partition components.  Our formulation can be understood as a \emph{hierarchical} transport problem that is invariant to the ordering of the individual components and sensitive to geometry, in contrast to simpler overlap-based measures, e.g., those based on KL-divergence or total variation.  We derive some theoretical properties of our distance and provide an extension to unbalanced problems where the partition components are weighted unequally.

Our target application is in \emph{political redistricting}, where we can use these tools to compare districting plans for some geographic region.  In recent ensemble-based approaches to districting plan analysis, a large collection of feasible voting districts is generated computationally as a baseline for the evaluation of a proposed plan; the baseline set samples the achievable properties for plans given the political geography of a state.  An issue in current ensemble-based redistricting pipelines is that the only two options for visualizing and navigating the ensemble are (1) showing a few randomly selected plans as examples or (2) plotting the empirical distribution of the values of a given measure, such as the number of districts won by a particular political party or a compactness metric summarizing the shapes of the districts, over the ensemble.  The first option shows an exceptionally small subset of ensembles that can number in the millions, while the second is an indirect means of understanding the relationships between plans.  Here, we show that our transport metric---coupled with embedding methods like multidimensional scaling (MDS)---provides an alternative to these two options, giving a direct and intuitive means of visualizing an entire ensemble.  Our experiments confirm the value of this approach in practice, applied to both synthetic and real-world datasets.

\paragraph*{Outline} 
In \cref{section:Related work}, we comment on some of the most relevant literature, including that from optimal transport and ensemble-based redistricting. Next, \cref{section:Preliminaries} reviews the basic terms and notation. \cref{section:basic optimal transport} reviews the basics of optimal transport and the Wasserstein metric, including  the Kantorovich and Beckmann problems. In \cref{section:distance mass balanced partitions} and \cref{section:unbalanced_distances}, we introduce the distance metric between partitions and prove some basic properties. In \cref{section:bounds}, we compare the transportation-based metric to other metrics over partitions. Finally, \cref{sec:experiments} illustrates uses of the metric with both simulated as well as real geographic data and  \cref{sec:conclusion}  summarizes our work, including open problems and avenues for future research. 

\section{Related Work}\label{section:Related work}

\paragraph*{Optimal transport} Optimal transport is the problem of computing a matching between supply and demand---represented as two measures on a geometric space---with minimal cost. This problem was originally posed by Monge \cite{monge1781memoire} but did not see much activity until Kantorovich's work more than a century later \cite{kantorovich1942translocation}; see \cite{galichon2018optimal,villani2003topics,santambrogio2015optimal} for thorough introductions.

While the optimal transport problem was initially concerned with matching probability measures, later work considered more general instances such as partial transportation \cite{caffarelli2010free} and matching general measures \cite{figalli2010transportation}.
The latter is an example of \emph{unbalanced} optimal transport, a popular topic in modern theoretical and applied transport 
\cite{lombardi2015eulerian,chizat2018scaling}.

Spurred by applications in machine learning, computer vision, and other disciplines, a wealth of computational techniques has become available for approximating transport distances and derived quantities; see \cite{peyre2019computational} for a recent survey. Particularly relevant to our work is transport over graphs with shortest-path distance as the cost.  This problem is known in the computational literature as \emph{minimum-cost flow without edge capacities} \cite{ahuja1988network} and in the transport literature as the 1-Wasserstein distance or Beckmann problem \cite{beckmann1952continuous,santambrogio2013prescribed}.  See \cite{essid2018quadratically} for a survey of computational methods for this problem.

\paragraph*{Geometry of partitions}
Motivated by the study of partitions of minimal perimeter, Leonardi and Tamanini  introduced a metric on the space of (measurable) partitions of subset of Euclidean space \cite{leonardi2002metric}. Their distance is based on the measure of the symmetric difference between sets, producing a complete and separable metric space but only weakly using the geometry of the underlying domain. 
There is additionally a literature on the geometry of set-theoretic partitions, i.e.\ decompositions of a ground set into disjoint subsets \cite{day1981complexity,meilua2007comparing,denoeud2006comparison}, which examines the properties of metrics on the lattice on these set partitions. 

\paragraph*{Ensemble-based redistricting}

A growing body of scientific research centers on ensemble-based redistricting, which uses mathematical and algorithmic techniques to generate and analyze a collection of thousands or millions of candidate districting plans that meet some criteria \cite{chikina2017assessing,herschlag2017evaluating,Mattingly2018, deford2019redistricting, bangia2017redistricting, chen2013unintentional, chen2015cutting}. These methods and analyses are increasingly used as quantitative tools by legal experts, policymakers, and the public at-large to inform debates around redistricting. This circumstance is particularly salient in the legal context, where ensembles of voting maps generated with these methods are being submitted as evidence to state and federal courts in redistricting and voting rights cases, underlining the pressing need to understand quantitative and qualitative properties of ensembles. This is challenging because the space of maps meeting a reasonable collection of criteria is extremely large and poorly understood, making it difficult to quantify the ``diversity'' of a given collection of districting plans. Previous works focus on the distributions of several statistics of interest, such as the number of districts won by a particular political party or a compactness metric, and perform statistical analysis in this lower-dimensional space.

For such analyses to be robust, they should be performed on a sample of candidate districting plans representative of the universe of valid plans, such as those that meet legal criteria. If an ensemble contains many plans that are all slight variations of one another, a projection to summary statistics may be misleading, since those similar plans likely yield similar statistics, which would in turn be over-represented in the analysis. Assessing the diversity of a sample requires some measure of dissimilarity, and a rigorous development of such a measure is not present in the previous literature. \mute{\color{red} The existing ``geometry of redistricting'' literature largely focuses on the geometric properties of the shapes of the districts themselves} {This paper presents a novel direction within the ``geometry of redistricting'' that is somewhat orthogonal to the primary direction of the field, which largely focuses on analyzing the shapes of the districts themselves;} \mute{\color{red} This is separate from the task of embedding the districting plans themselves in space with geometric properties.}  for a survey of classical approaches in shape analysis for political districts, see \cite{young1988compact}.

\section{Preliminaries}\label{section:Preliminaries}

A graph will be denoted by $G = (V, E)$, with $V$ denoting the set of vertices and $E$ the set of edges. If the graph is weighted with a weight $\omega:E \to \mathbb{R}$ then we will write $G = (V,E,\omega)$. The signed incidence matrix associated to the graph $G$ will be denoted by $P$:
\begin{align*}
P_{ev}:= \left\{
\begin{array}{rl}
-1 & \textrm{ if }e=(v,w)\textrm{ for some }w\in V\\
1 & \textrm{ if }e=(w,v)\textrm{ for some }w\in V\\
0 & \textrm{ otherwise.}
\end{array}
\right.
\end{align*}

We denote by $\M(V)$ the set of all mass distributions over $V$ and by $\Prob(V) \subset \M(V)$ the set of all probability distributions over $V$. Specifically, 
\begin{align*} 
  \M(V) & = \{ x \in \R^{|V|} \mid x(v) \geq 0 \;\forall\;v\in V\},\textrm{ and}\\
  \Prob(V) & = \left\{x \in \R^{|V|} \mid x(v) \geq 0 \;\forall\;v\in V\textrm{ and }\sum_{v\in V} x(v) = 1\right\}.
\end{align*} 
We will also consider the set of mass distributions over the product $V\times V$
\begin{align*} 
  \M(V\times V) & = \{ x \in \R^{|V|^2} \mid x(v,w) \geq 0 \;\forall\;v,w\in V\}.
\end{align*}
Let $\Prob(V)^k$ be the set of $k$-tuples of elements of $\Prob(V)$, and let $\Prob(V)^{k*} \cong \Prob(V)^k/S_n$ be the set of $k$-tuples of elements of $\Prob(V)$ up to reordering. Similarly, let $M(V)^k$ be the set of $k$-tuples of elements of $M(V)$, and let $M(V)^{k*}\cong M(V)^k/S_n$ be the set of $k$-tuples of $M(V)$ up to reordering. \Cref{table:notation} provides relevant notation.

\begin{table}[H]
\centering
\begin{tabular}{l|l} 
\allowdisplaybreaks
Notation  & Definition \\
\hline

$G = (V,E)$ & graph with vertices $V$ and edges $E$\\
$G = (V,E,\omega)$ & weighted graph with vertices $V$, $E$, and weights $\omega:E\to \mathbb{R}$\\
$d(v,w)$ & Shortest-path distance between $v,w\in V$\\
$M(V)$ & set of all mass distributions on $V$ \\
$M(V)^k$ & ordered $k$-tuples of mass distributions on $V$ \\
$M(V)^{k*}$ & the set $M(V)^k$ modulo index rearrangements \\
$\Prob(V)$ & set of all probability distributions on $V$\\
$\Prob(V)^k$ & ordered $k$-tuples of probability distributions on $V$\\
$\Prob(V)^{k*}$ & the set $\Prob(V)^k$ modulo index rearrangements\\
$W_1(f,g)$ & Wasserstein distance between $f$ and $g$
\end{tabular}
\caption{Notation}\label{table:notation}
\end{table}

\section{Transport distances}\label{section:basic optimal transport}

In this section, we review some notions from the theory of optimal transport that will be relevant to our discussion.  We limit to a few basic results from transport over graph domains; see \cite{villani2003topics,santambrogio2015optimal} for the general case.

Let $x,y\in \M(V)$. A \emph{coupling} or \emph{transport plan} between $x$ and $y$ is a function $\pi:V\times V\to\mathbb{R}_+$ such that 
\begin{align*}
  \sum \limits_{w \in V} \pi(v,w) = x(v)\;\;\forall\;v \in V\ \textrm{and}\ 
  \sum \limits_{v\in V} \pi(v,w) = y(w)\;\;\forall\;w\in V.
\end{align*}
We will use $\Pi(x,y)$ to denote the set of such couplings. 

\begin{remark}
   If for $x,y\in \M(V)$ there is at least one $\pi \in \Pi(x,y)$, then
\begin{align*}
  \sum \limits_{v\in V} x(v) = \sum\limits_{v\in V}\sum\limits_{w\in V} \pi(v,w)\textrm{ and }
  \sum \limits_{w\in V} y(w) = \sum \limits_{w\in V}\sum\limits_{v\in V}\pi(v,w).
\end{align*}
The sums on the right-hand sides of the two equations are finite rearrangements of each other and therefore must agree. Conversely, if $x$ and $y$ have the same total mass $m > 0$, then the product distribution $\pi(v,w) = \tfrac{1}{m}x(v)y(w)$ belongs to $\Pi(x,y)$.
This means that there are admissible plans between $x$ and $y$ when (and only when) they have these two distributions have the some total mass.

\end{remark}

The \emph{total transportation cost} of a plan $\pi \in \Pi(x,y)$ is $\sum_{v,w \in V}d(v,w)\pi(v,w)$; this objective function states that the cost of moving mass between vertices $v,w\in V$ is the shortest-path distance $d(v,w)$. Then, the \emph{transport distance} (also known as 1-Wasserstein distance) between $x$ and $y$ is defined as the minimum total transportation cost for a plan $\pi \in \Pi(x,y)$, denoted $W_1(x,y)$:
\begin{align}\label{eqn:transport metric definition}
  W_1(x,y) := \min \limits_{\pi \in \Pi(x,y)} \sum\limits_{v,w \in V} d(v,w)\pi(v,w).
\end{align}
By convention, we take $W_1(x,y)= +\infty$ if $x$ and $y$ do not have the same total mass. A consequence of the more general theory of optimal transport is that $W_1$ defines a metric in $\Prob(V)$; see \cite[Chapter 7]{villani2003topics} or \cite[Chapter 5]{santambrogio2015optimal} for a general discussion or \cite{cuturi2014ground} for a proof specifically in the discrete case.  

In computational practice, formulation \eqref{eqn:transport metric definition} can be difficult to solve because it requires computing and operating on the set of $|V|\times |V|$ pairwise distances.  In graph theory, however, this problem is known as \emph{minimum cost flow without edge capacities} and admits an alternative formulation with a number of variables linear in $|E|$:
\begin{equation}\label{eq:w1graph}
    W_1(x,y)=\left\{
    \begin{array}{rl}
    \min\limits_{J\in \R^{|E|}} & \sum\limits_{e\in E}\omega(e)|J_e|\\
    \st & P^\top J = y-x,
    \end{array}
    \right.
\end{equation}
where $\omega$ denotes edge weights and $P$ is the incidence matrix associated to $G$. If the graph is unweighted, we can take $\omega(\cdot)\equiv1.$ The equivalence between \eqref{eqn:transport metric definition} and \eqref{eq:w1graph} is discussed in \cref{section:unbalanced transport p=1} in the broader setting of unbalanced transport. See also \cite{essid2018quadratically} and references therein for motivation for this formula as well as references to techniques that can solve the linear program associated to \eqref{eq:w1graph} in practice.

\section{The distance on partitions: balanced case}\label{section:distance mass balanced partitions}

In this section, we propose a distance between graph partitions that lifts the transport distances described above. This distance is defined in two steps: computing distances between partition components and subsequently finding a minimum-cost matching between the partition components. In particular, we take the distance between components to be the Wasserstein distance and use linear assignment to find the matching. With this definition in place, we prove some basic properties of the lifted distance and give a formulation as a single combined linear program rather than a two-step procedure.

\subsection{Distances between components}\label{subsec:balanced_comp_distances}
Let $G = (V, E)$ be a graph, and let $(V_1, \hdots, V_k)$ be a partition of the vertices of $G$. We can represent $(V_1, \hdots, V_k)$ by an element of $\Prob(V)^{k*}$ as follows: To every component $V_i$, we associate a vector $x_i \in \R^{|V|}$ such that 
\begin{equation}\label{eqn:uniform_rep} x_i(v) = \begin{cases} \frac{1}{|V_i|} &  \textrm{if }v \in V_i \\ 0 & \text{otherwise.} \end{cases} \end{equation}
Then, $X = (x_1, \hdots, x_k)\in\Prob(V)^{k*}$ gives a concrete representation of the partition $(V_1, \hdots, V_k)$.
This expression defines a balanced representation of partitions, because $\sum_{v \in V} x_i(v) = 1$ for all $i \in \{1, \hdots, k\}$. The case of unbalanced representations is covered in \cref{section:unbalanced_distances}. 

\begin{remark} \label{remark:normalized_representation} Given a strictly positive weight function on the vertices $\omega : V \to \R_+$, we can give an alternative definition of the vector $x_i$ associated to component $V_i$ as 
\begin{equation*}\label{eqn:weighted_rep} x_i(v) = \begin{cases} \frac{\omega(v)}{\sum\limits_{u \in V_i} \omega(u)} &  \textrm{if }v \in V_i \\ 0 & \text{otherwise.} \end{cases} \end{equation*} In our target application of political redistricting, this alternative definition can be useful when incorporating populations associated with census units. 
\end{remark}

Now that we have a representation of graph partitions in $\Prob(V)^{k*}$, we can define a distance between components of partitions. Let $X = (x_1, \hdots, x_k) \in \Prob(V)^{k*}$ and $Y = (y_1, \hdots, y_k) \in \Prob(V)^{k*}$ be partitions of $G$ using  the (unweighted) Wasserstein distance, defined in \cref{section:basic optimal transport}. In particular, for any $x_i \in X$ and $y_j \in Y$, we take 
\begin{align*}
\label{eqn:wasserstein-district}
  W_1(x_i,y_j)= \left\{
    \begin{array}{rl}
    \min\limits_{J\in \R^{|E|}} & \sum\limits_{e\in E}|J_e|\\
    \st & P^\top J = y_j-x_i.
    \end{array}\right.
\end{align*}

\subsection{Distances between partitions}\label{subsec:balanced_part_distances} In \cref{subsec:balanced_comp_distances}, we endowed the space of partition components with the Wasserstein distance. Here, we lift this distance to a distance between partitions using a linear assignment problem. 

For ease of notation, we define the relevant constraint set for our problem:
\begin{definition}[Birkhoff polytope]
The \emph{Birkhoff polytope} $\DS_k$ is the set of all $k\times k$ \emph{doubly stochastic} matrices, the nonnegative matrices whose rows and columns sum to 1:
\begin{equation}
    \DS_k = \{ S\in\R^{k\times k} \mid S\1=\1, S^\top \1= \1,\textrm{ and }S\geq0\}.
\end{equation}
\end{definition}

The Birkhoff--von Neumann theorem gives that $\DS_k$ is a convex polytope and its vertices are exactly the \textit{permutation matrices}, those elements of $\DS_k$ with integer entries.

\begin{definition}[Lifted distance]
Given a distance $C:\Prob(V)\times \Prob(V) \to \mathbb{R}$, the \emph{lifted distance} $A : \Prob(V)^{k*} \times \Prob(V)^{k*} \to \R$ between partitions $X$ and $Y$ is defined as  
\begin{equation}\label{eqn:A definition 1}
A(X,Y)=\left\{
\begin{array}{rl}
  \min\limits_{S\in \R^{k \times k}} & \sum\limits_{ij} S_{ij} C(x_i, y_j) \\
                 \st & S\in\DS_k,
\end{array}\right.
\end{equation}
where $X = (x_1, \hdots, x_k), Y = (y_1, \hdots, y_k)\in \Prob(V)^{k*}$ and $C(\cdot,\cdot)$ is a distance between partition components.  Unless otherwise noted, we will take $C=W_1$, the Wasserstein distance \eqref{eq:w1graph}.
\end{definition}
Because the extreme points of the feasible region for $S$ are permutation matrices, the minimizer of this linear program can be interpreted as a matching between the components of $X$ and the components of $Y$, and the distance is the sum of the pairwise distances between matched components. 

\begin{remark} 
Many properties of this lifting are independent of the ground metric, which in \eqref{eqn:A definition 1} is the transport distance $W_1$.  From a broader perspective, we can view this construction as an instance of \emph{hierarchical} optimal transport, i.e., a matching problem whose cost comes from another matching problem; see \cite{yurochkin2019hierarchical} for discussion of another example in natural language processing.
\end{remark}

Before studying properties of our construction, we verify that \eqref{eqn:A definition 1} lifts any distance between components (i.e. a distance on $\Prob(V)$) to a distance on $\Prob(V)^{k*}$.

\begin{proposition}
\label{prop:plan_metric}
Given any metric $C:\Prob(V)\times \Prob(V) \to \R$, the lifted distance $A: \Prob(V)^{k*} \times \Prob(V)^{k*} \to \R$ is a metric on $\Prob(V)^{k*}$. \end{proposition}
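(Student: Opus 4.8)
The plan is to verify the four metric axioms directly for $A$, leaning on the fact (already invoked above via the Birkhoff--von Neumann theorem) that the linear objective in \eqref{eqn:A definition 1} is minimized over the bounded polytope $\DS_k$ at a vertex, i.e.\ at a permutation matrix. First I would confirm that $A$ is well defined on the quotient $\Prob(V)^{k*}$: reordering the components of $X$ or $Y$ permutes the rows or columns of the cost matrix $[C(x_i,y_j)]_{ij}$, and since $\DS_k$ is invariant under left and right multiplication by permutation matrices, the value of the minimization is unaffected. Nonnegativity $A(X,Y)\ge 0$ is immediate from $S\ge 0$ and $C\ge 0$.

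Next I would treat the identity of indiscernibles. For the easy direction, if $X$ and $Y$ agree up to reordering I choose representatives with $x_i=y_i$ and take $S$ to be the identity, giving objective value $\sum_i C(x_i,x_i)=0$; with nonnegativity this forces $A(X,Y)=0$. For the converse, suppose $A(X,Y)=0$ and let $S=P_\sigma$ be an optimal vertex for some permutation $\sigma$. The objective is then $\sum_i C(x_i,y_{\sigma(i)})=0$, and since each summand is nonnegative we get $C(x_i,y_{\sigma(i)})=0$ for every $i$; because $C$ is a metric this yields $x_i=y_{\sigma(i)}$, so $X=Y$ in $\Prob(V)^{k*}$. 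Symmetry is then quick: the map $S\mapsto S^\top$ is a bijection of $\DS_k$ onto itself, and for any feasible $S$ the matrix $S^\top$ is feasible for $A(Y,X)$ with the same objective value by symmetry of $C$, so $A(X,Y)=A(Y,X)$.

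The main work---and the step I expect to be the crux---is the triangle inequality $A(X,Z)\le A(X,Y)+A(Y,Z)$. The idea is to combine optimal plans by matrix multiplication rather than by composing permutations. Let $S$ be optimal for $(X,Y)$ and $T$ optimal for $(Y,Z)$. Since the product of doubly stochastic matrices is doubly stochastic ($ST\1=S\1=\1$, $\1^\top ST=\1^\top T=\1^\top$, and $ST\ge 0$), the matrix $ST$ is feasible for $A(X,Z)$. Inserting the bound $C(x_i,z_k)\le C(x_i,y_j)+C(y_j,z_k)$ into $\sum_{i,k}(ST)_{ik}C(x_i,z_k)=\sum_{i,j,k}S_{ij}T_{jk}C(x_i,z_k)$ and collapsing the free index via the stochasticity conditions $\sum_k T_{jk}=1$ and $\sum_i S_{ij}=1$, the two resulting terms reduce to $\sum_{ij}S_{ij}C(x_i,y_j)=A(X,Y)$ and $\sum_{jk}T_{jk}C(y_j,z_k)=A(Y,Z)$. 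As $A(X,Z)$ is the minimum over all feasible matrices and $ST$ is one of them, the inequality follows. The only subtlety to check carefully is that the index collapsing invokes the two \emph{different} stochasticity constraints---the rows of $T$ in one term and the columns of $S$ in the other---so both halves of the doubly stochastic structure are genuinely used.
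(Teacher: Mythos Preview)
Your argument is correct and follows essentially the same outline as the paper's proof: both verify the axioms directly and prove the triangle inequality by composing optimal plans for $(X,Y)$ and $(Y,Z)$ to obtain a feasible plan for $(X,Z)$. The one noteworthy difference is that in the triangle inequality the paper picks optimal \emph{permutation} matrices $N$ and $W$ (via Birkhoff--von Neumann) and composes them as $B=NW$, reducing the sums to a single index $j$ with unique matched indices $i_j,l_j$; you instead take arbitrary optimal doubly stochastic $S,T$, use that $ST\in\DS_k$, and collapse indices via the row/column stochasticity conditions. Your route is marginally more general in that it never needs the vertex structure of $\DS_k$ for this step, while the paper's version makes the combinatorics of the matching more explicit. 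You also add the well-definedness check on the quotient $\Prob(V)^{k*}$, which the paper leaves implicit.
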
 

\begin{proof}
 Since $C$ is a metric and $S$ is nonnegative, it is immediate that $A$ is nonnegative and symmetric. 

Let $X=(x_1, x_2, ..., x_k),Y=(y_1, y_2, ..., y_k)\in \Prob(V)^{k*}$, and suppose $X \equiv Y$ in $\Prob(V)^{k*}$. Then, there exists some permutation $P$ with 
$P_{ij} = 1$ if and only if $x_i = y_j$. Since $C(x_i, y_j) = 0$ when $x_i = y_j$, we have
$\sum_{ij} P_{ij} C(x_i, y_j) = 0.$  Hence, when $X \equiv Y$ in $\Prob(V)^{k*}$, $A(X, Y) = 0$.  
Conversely, suppose $A(X, Y) = 0$, and let $S$ minimize \eqref{eqn:A definition 1}. 
If $S_{ij} = 1$, then since the objective value is zero we must have $C(x_i, y_j) = 0$. Because $C$ is a metric, $C(x_i, y_j) = 0$ if and only if $x_i = y_j$. Therefore, 
$X \equiv Y$ in $\Prob(V)^{k*}$. 

Now, suppose $X,Y,Z\in \Prob(V)^{k*}$. Let $N$ and $W$ be minimizing permutation matrices with $A(X,Y) = \sum_{ij}N_{ij}C(x_i, y_j)$ and $A(Y,Z) = \sum_{jl}W_{jl}C(y_j, z_l)$. Then, 
\begin{align*} A(X,Y) + A(Y,Z) &=\sum_{ij} N_{ij}C(x_i,y_j) + \sum_{jl} W_{jl}C(y_j,z_l) \\
&= \sum_{i,l} \sum_{j} \left(N_{ij} C(x_i, y_j) + W_{jl} C(y_j, z_l)\right). \end{align*} 
Because $N$ and $W$ are permutations, for a fixed $j$, there is a unique $i_j$ such that $N_{i_jj} = 1$ and a unique $l_j$ such that $W_{jl_j} = 1$. Therefore, 
\begin{align*} A(X, Y) + A(Y, Z) = \sum_j \big(C(x_{i_j}, y_j) + C(y_j, z_{l_j})\big) 
\geq \sum_j C(x_{i_j}, z_{l_j}), \end{align*}  
by the triangle inequality. Let $B = NW$. Then, 
$\sum_j C(x_{i_j}, z_{l_j}) = \sum_{il} B_{il} C(x_i, z_l).$ Because $B$ is a permutation, $\sum_{il} B_{il} C(x_i, z_l) \geq A(X, Z)$. Therefore, $A(X, Y) + A(Y, Z) \geq A(X, Z),$ verifying a triangle inequality.
\end{proof}

\begin{remark}
Since $C$ is a metric, \cref{prop:plan_metric} likely follows from general results about discrete transport, e.g.\ \cite[Theorem 1]{cuturi2014ground}. We include the direct proof since metric properties follow directly from our definition.
\end{remark}

\subsection{Basic properties}\label{subsec:balanced_properties}

In this section, we prove several basic properties of the lifted distance. First, we show that if two partitions have a component in common, there exists an optimal matching that fixes the shared component:

\begin{proposition}
\label{prop:matched_districts}
Let $X = (x_1, \hdots, x_k)$ and $Y = (y_1, \hdots, y_k)$ be two partitions of $G$, and suppose $x_a = y_b$. Then, there exists a matching $S$ such that $S_{ab} = 1$ and $S$ is an optimizer for the lifted distance \eqref{eqn:A definition 1}.
\end{proposition}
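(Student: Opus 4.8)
The plan is to exploit the fact that the linear program \eqref{eqn:A definition 1} is minimized at a vertex of the Birkhoff polytope, so by the Birkhoff--von Neumann theorem we may assume the optimum is attained at a permutation matrix, i.e., at an honest matching $\sigma$. If $\sigma$ already pairs component $a$ with component $b$, we are done. Otherwise we perform a local exchange argument: we modify $\sigma$ into a new permutation that forces the pairing $(a,b)$ and show this modification does not increase the objective. Since we started from an optimizer, the new matching must then also be optimal, and it satisfies $S_{ab}=1$ by construction.

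Concretely, suppose $\sigma(a) = b'$ with $b' \neq b$, and let $a'$ be the index with $\sigma(a') = b$. The only two pairs of $\sigma$ that I would alter are $(x_a, y_{b'})$ and $(x_{a'}, y_b)$; I would replace them by $(x_a, y_b)$ and $(x_{a'}, y_{b'})$, leaving every other pair fixed. The change in objective value is
\begin{align*}
\Delta = \big[C(x_a, y_b) + C(x_{a'}, y_{b'})\big] - \big[C(x_a, y_{b'}) + C(x_{a'}, y_b)\big].
\end{align*}
The key step is to feed the hypothesis $x_a = y_b$ into this expression. This makes $C(x_a, y_b) = 0$ by the metric axioms, and it lets me rewrite $C(x_a, y_{b'}) = C(y_b, y_{b'})$ and $C(x_{a'}, y_b) = C(x_{a'}, x_a)$, so that
\begin{align*}
\Delta = C(x_{a'}, y_{b'}) - C(x_{a'}, x_a) - C(y_b, y_{b'}).
\end{align*}
The triangle inequality for $C$ applied to the triple $x_{a'}, x_a = y_b, y_{b'}$ gives $C(x_{a'}, y_{b'}) \le C(x_{a'}, x_a) + C(y_b, y_{b'})$, whence $\Delta \le 0$.

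Thus the exchanged matching has objective value no larger than that of the optimal $\sigma$, so it is itself optimal, and it pairs $a$ with $b$; setting $S$ to be its permutation matrix finishes the proof. I expect the argument to be essentially routine, and the only real care is bookkeeping: verifying that the exchange produces a valid permutation (it does, since swapping the images of $a$ and $a'$ under $\sigma$ is again a bijection) and correctly substituting $x_a = y_b$ before invoking the triangle inequality. The main conceptual point — which is where I would focus the exposition — is that this proposition relies only on the metric properties of $C$ and on the vertex structure of $\DS_k$, not on any special feature of $W_1$, so the statement holds for the general lifted distance of \eqref{eqn:A definition 1}.
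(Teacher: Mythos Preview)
Your proposal is correct and follows essentially the same approach as the paper: start from an optimal permutation, perform the two-element swap that forces the pairing $(a,b)$, and use $x_a=y_b$ together with the triangle inequality for $C$ to show the swap does not increase the objective. The only cosmetic difference is that you explicitly rewrite $C(x_a,y_{b'})$ and $C(x_{a'},y_b)$ via the identity $x_a=y_b$ before invoking the triangle inequality, whereas the paper applies the triangle inequality directly to $C(x_a,y_d)+C(x_c,y_b)\ge C(x_c,y_d)$; these are the same step.
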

\begin{proof} 

Let $P$ be a permutation matrix that is an optimizer for $A(X, Y)$, so $A(X, Y) = \sum_{ij} C(x_i,y_j) P_{ij}$. Suppose that $P$ maps $x_a$ to some $y_d$ and some $x_c$ to $y_b$. Let $S$ be the permutation obtained by matching $x_a$ to $y_b$, $x_c$ to $y_d$, and every other component in $X$ to its image under $P$. It is clear that
\begin{align*} \sum_{ij} S_{ij} C(x_i, y_i) &= \sum_{ij} P_{ij} C(x_i, y_j) - C(x_a, y_d) - C(x_c, y_b) + C(x_a, y_b) + C(x_c, y_d).
\end{align*}
Since $C$ is a metric, the triangle inequality yields $C(x_a, y_d) + C(x_c, y_b) \geq C(x_c, y_d).$ Because $x_a=y_b$ by assumption, $C(x_a, y_b) = 0$, so
\begin{align*}
- C(x_a, y_d) - C(x_c, y_d) + C(x_a, y_b) + C(x_c, y_d) \leq 0,
\end{align*}
and therefore
\begin{align*} \sum_{ij} S_{ij} C(x_i, y_i) \leq \sum_{ij} P_{ij} C(x_i, y_j).
\end{align*}
Since $P$ is an optimizer for the lifted distance, we also have that 
$$\sum_{ij} P_{ij} C(x_i, y_j) \leq \sum_{ij} S_{ij} C(x_i, y_j).$$
Therefore, $S$ is an optimizer for $A(X, Y)$ with $S_{ab} = 1$.   
\end{proof}

The current formulation of the lifted distance involves computing the pairwise distances between partition components, and subsequently solving a linear assignment problem to find the minimum cost matching. Below, we formulate the lifted distance using only one linear program.

\begin{proposition}
\label{prop:simplified_lp}
The lifted distance $A(X, Y)$ between partitions $X,Y$ satisfies
\begin{equation}\label{eq:combined}
A(X, Y) = \left\{
\begin{array}{rl}
\min\limits_{Q \in \R^{|E| \times k^2}, S \in \R^{k \times k}} &\sum\limits_{ij} \sum\limits_{e\in E} | Q_{ij}^e|\\
\st
&S\in\DS_k\\
& P^\top Q_{ij} - (x_i - y_j)S_{ij} = 0.
\end{array}
\right.
\end{equation}
\end{proposition}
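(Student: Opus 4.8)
The plan is to prove equality of the two optimization problems by interchanging the order of minimization: I will fix a feasible doubly stochastic matrix $S$ in the combined program \eqref{eq:combined}, carry out the inner minimization over the flow variables $Q$ alone, and show that this inner value is exactly $\sum_{ij} S_{ij} W_1(x_i, y_j)$. Once that is established, minimizing over $S \in \DS_k$ recovers precisely the definition \eqref{eqn:A definition 1} of $A(X,Y)$ with $C = W_1$, which gives the result.

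First I would observe that for a fixed $S$ the objective $\sum_{ij}\sum_{e\in E} |Q_{ij}^e|$ and the constraints $P^\top Q_{ij} - (x_i - y_j)S_{ij} = 0$ couple each block $Q_{ij}$ only to the scalar $S_{ij}$, with no interaction across distinct index pairs. Hence the inner minimization decouples into $k^2$ independent subproblems, one per pair $(i,j)$:
\begin{equation*}
\min_{Q_{ij}\in\R^{|E|}} \left\{ \sum_{e\in E} |Q_{ij}^e| \ :\ P^\top Q_{ij} = (x_i - y_j)S_{ij} \right\}.
\end{equation*}
The key step is to evaluate each subproblem using the positive homogeneity of the minimum-cost-flow formulation \eqref{eq:w1graph}. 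Since $S_{ij}\ge 0$, when $S_{ij}>0$ the substitution $Q_{ij} = S_{ij} J$ turns the subproblem into $S_{ij}\min_J\{\sum_{e\in E} |J_e| : P^\top J = x_i - y_j\}$; because the flow objective is invariant under $J\mapsto -J$, this minimum equals $W_1(x_i,y_j)$, so the subproblem value is $S_{ij}W_1(x_i,y_j)$.

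The remaining care goes into the degenerate case $S_{ij}=0$, where the substitution is unavailable. Here the constraint reads $P^\top Q_{ij}=0$, which is solved by $Q_{ij}=0$ at objective value $0$; since $x_i$ and $y_j$ have equal total mass the quantity $W_1(x_i,y_j)$ is finite, so $S_{ij}W_1(x_i,y_j)=0$ as well, and the subproblem value is again $S_{ij}W_1(x_i,y_j)$. Summing the $k^2$ subproblem values shows that the inner minimum equals $\sum_{ij} S_{ij} W_1(x_i,y_j)$ for every feasible $S$, and minimizing over $S\in\DS_k$ then closes the argument.

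I expect the main obstacle to be bookkeeping rather than conceptual: one must confirm the sign convention relating the $(x_i-y_j)$ appearing in \eqref{eq:combined} to the $(y_j - x_i)$ of \eqref{eq:w1graph}, justify the scaling rigorously in both regimes $S_{ij}>0$ and $S_{ij}=0$, and verify that the decoupling of the inner problem is legitimate because neither the separable objective nor the block-structured constraints link different pairs $(i,j)$. A cleaner alternative, should the interchange-of-minimization framing feel delicate, is to prove the two inequalities separately—constructing $Q_{ij}=S_{ij}\tilde J_{ij}$ from an optimal $S$ and optimal component flows for one direction, and setting $J_{ij}=Q_{ij}/S_{ij}$ (with $Q_{ij}=0$ whenever $S_{ij}=0$) to recover a feasible $S$ for the other—but both routes hinge on the same homogeneity identity.
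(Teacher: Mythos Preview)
Your proposal is correct and follows essentially the same approach as the paper: both arguments combine the nested minimizations and use the substitution $Q_{ij}=S_{ij}J_{ij}$ to pass between the bilinear objective $\sum_{ij}S_{ij}\sum_e|J_{ij}^e|$ and the linear one $\sum_{ij}\sum_e|Q_{ij}^e|$. If anything, your treatment is more careful than the paper's, since you explicitly handle the degenerate case $S_{ij}=0$ (where the substitution is not invertible) and the sign discrepancy between $x_i-y_j$ and $y_j-x_i$, both of which the paper glosses over.
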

\begin{proof} 
Substituting the transport cost \eqref{eq:w1graph} into \eqref{eqn:A definition 1}, we can write 
$$
A(X, Y) = 
\left\{
\begin{array}{rl}
\min\limits_{S\in \R^{k\times k}} &\sum\limits_{ij} S_{ij} \sum\limits_{e\in E} | J_{ij}^{*e}|\\
\st &S\in\DS_k\\
& J_{ij}^* = 
\left\{
\begin{array}{rl}
\argmin\limits_{J_{ij}\in \R^{|E|}} & \sum\limits_{e\in E} |J_{ij}^e|\\
\st &  P^\top J_{ij} = (x_i - y_j) .
\end{array}
\right.
\end{array}\right.
$$

Since the inner and outer problems are both minimizations, we can simplify this to
$$
A(X, Y) =
\left\{
\begin{array}{rl}
\min\limits_{S \in \R^{k\times k}, J\in \R^{|E|}} &\sum_{ij} S_{ij} \sum\limits_{e\in E} | J_{ij}^{e}|\\
\st &S\in\DS_k\\
& P^\top J_{ij} = (x_i - y_j) 
\end{array}
\right.
$$
This is a quadratic program in $S$ and $J$. Substituting $Q_{ij}^e = S_{ij}J_{ij}^e$ yields the desired result.

\end{proof}

The alternative formula in \cref{prop:simplified_lp} computes the distances between components and the linear assignment problem in a single linear program, suggesting an alternative means for computing $A(\cdot,\cdot)$ without first computing pairwise costs. Even so, $S$ gives the minimum matching of the components of $X$ and $Y$, and $Q$ represents the flow along each edge.

A standard linear programming duality argument applied to \eqref{eq:combined} shows 

\begin{equation}\label{eq:fulldual}
    A(X,Y) = 
    \left\{
    \begin{array}{rl}
    \max\limits_{\phi, \psi \in \R^k, \gamma \in \R^{|V| \times k^2}}   &\1^\top(\phi + \psi) \\
    \st & \phi_i + \psi_j \leq \gamma_{ij}^\top(x_i - y_j)\\
    & |\gamma_{ij}^{w} - \gamma{ij}^{v}| \leq 1 \ \forall \ (w, v) \in E.
    \end{array}
    \right.
\end{equation}

Similar to the argument in \cref{prop:simplified_lp}, this formula also can be derived directly by substituting the dual of \eqref{eq:w1graph} into the dual of \eqref{eqn:A definition 1}.

\section{The distance on partitions: unbalanced case}\label{section:unbalanced_distances}

In this section, we revisit the construction in \cref{section:distance mass balanced partitions} to propose a distance between graph partitions in a setting where the total mass of each component may not be equal.  We call these \textit{unbalanced partitions}. Similar to the balanced case, we define the distance in two steps: computing distances between partition components and using a linear assignment to lift the distance between components to a distance between partitions. In the general case, the distance between partition components is a modified transport distance that allows for mass to be inserted or removed at vertices with some cost. 
We show that lifting the unbalanced distance between components gives a valid metric on the space of unbalanced graph partitions and prove some basic properties of the general distance. 

\subsection{Distances between unbalanced components}\label{subsec:unbalanced_component_distances} Let $G = (V, E, \omega)$ be a weighted graph, and let $(V_1, \hdots, V_k)$ be a partition of the vertices of $G$. We define an unbalanced representation of $(V_1, \hdots, V_k)$ in $M(V)^{k*}$ as follows: To every component $V_i$, we associate a vector $x_i \in \R^{|V|}$ such that 
\begin{equation*} x_i(v) = \begin{cases} \omega(v) & \text{if} \ v \in V_i \\ 0 & \text{otherwise.} \end{cases} \end{equation*}
Then, $X = (x_1, \hdots, x_k) \in M(V)^{k*}$ gives an unbalanced representation of the partition $(V_1, \hdots, V_k)$. For example, in our target application of redistricting, the vertices of the graph correspond to geographic units such as census blocks and $\omega$ might represent populations associated with these units, which typically are balanced between voting districts but not identical from one unit to the next. 

We address this first at the level of $\M(V)$ with some inspiration from the formulations of unbalanced optimal transport in  \cite{chizat2018scaling,lombardi2015eulerian}. Let $x,y \in \M(v)$. For $p\geq 1$ and $\lambda>0$, the unbalanced problem consists of minimizing (see \cref{section:unbalanced transport p=1} for further discussion when $p=1$)
\begin{equation}\label{eqn:unbalanced problem for components}
    \begin{array}{rl}
    \min\limits_{J\in \R^{|E|}, z \in \R^{|V|}} & \|J\|_1 + \lambda \|z\|_p\\
    \st & P^\top J = y-x + z.
    \end{array}
\end{equation}
Then, we introduce a distance function on the space $M(V)$ of unbalanced partition components of a graph $G = (V, E, \omega)$ as follows. 

Let $X = (x_1, \hdots, x_k), Y = (y_1, \hdots, y_k) \in M(V)^{k*}$ be unbalanced partitions of $G$. The distance between components $x_i$ and $y_j$ is defined as
\begin{equation}\label{eqn:unbalanced_cost}
  C_{\lambda, p}(x_i,y_j)= \left\{
    \begin{array}{rl}
    \min\limits_{J\in \R^{|E|}, z \in \R^{|V|}} & \|J\|_1 + \lambda \|z\|_p\\
    \st & P^\top J = y_j-x_i + z\\
    \end{array}\right.
\end{equation}
where $\lambda \geq 0$ and $p \geq 1$ are parameters of the distance function $C_{\lambda, p}$. The variable $z$ allows slack in the amount of mass transported to or from each vertex under the transport plan. 
The parameters $\lambda$ and $p$ determine the weight of $z$ relative to $J$ in the objective. In \S \ref{subsec:unbalanced_basic_properties}, we discuss how the choice of $\lambda$ affects $C_{\lambda, p}$. 

In the following proposition, we show that $C_{\lambda, p}$ defines a valid metric on $M(V)$.:

\begin{proposition}
The function $C_{\lambda, p}(x,y)$ is a metric on $M(V)$ when $\lambda \geq 0$ and $p \geq 1$.
\end{proposition}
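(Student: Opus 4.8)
The plan is to verify the three metric axioms for $C_{\lambda,p}$ on $M(V)$: nonnegativity with identity of indiscernibles, symmetry, and the triangle inequality. Nonnegativity is immediate since the objective $\|J\|_1 + \lambda\|z\|_p$ is a sum of (scaled) norms, hence nonnegative, and the feasible set is nonempty for any $x,y$ because we may freely choose $z = P^\top J - y + x$ for any $J$ (in particular $J = 0$, $z = x - y$ is always feasible, so the minimum is finite). For identity of indiscernibles: if $x = y$ then $J = 0$, $z = 0$ is feasible with objective $0$, so $C_{\lambda,p}(x,x) = 0$. Conversely, if $C_{\lambda,p}(x,y) = 0$ with $\lambda > 0$, then the optimal $z$ must satisfy $\|z\|_p = 0$, forcing $z = 0$, and $\|J\|_1 = 0$ forcing $J = 0$; the constraint then gives $y - x = 0$. (I would flag that the $\lambda = 0$ edge case needs separate comment, since there identity of indiscernibles can fail — the statement's hypothesis $\lambda \geq 0$ is slightly optimistic and the proof presumably restricts to $\lambda > 0$ or argues via connectivity; I would check this against the authors' intent.)

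For symmetry, the key observation is that the feasible set for $(x,y)$ is carried to the feasible set for $(y,x)$ by the substitution $(J,z) \mapsto (-J,-z)$: if $P^\top J = y - x + z$ then $P^\top(-J) = x - y + (-z)$, and the objective is invariant under negation since $\|-J\|_1 = \|J\|_1$ and $\|-z\|_p = \|z\|_p$. This bijection preserves the objective value, so the two minimization problems have equal optimal values, giving $C_{\lambda,p}(x,y) = C_{\lambda,p}(y,x)$.

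The main work, as usual, is the triangle inequality. The plan is a direct glueing argument. Given $x, y, w \in M(V)$, let $(J_1, z_1)$ be optimal for $C_{\lambda,p}(x,y)$ and $(J_2, z_2)$ optimal for $C_{\lambda,p}(y,w)$, so $P^\top J_1 = y - x + z_1$ and $P^\top J_2 = w - y + z_2$. Setting $J = J_1 + J_2$ and $z = z_1 + z_2$ gives $P^\top J = w - x + z$, so $(J,z)$ is feasible for the problem defining $C_{\lambda,p}(x,w)$. Then
\begin{align*}
C_{\lambda,p}(x,w) \leq \|J\|_1 + \lambda\|z\|_p
&= \|J_1 + J_2\|_1 + \lambda\|z_1 + z_2\|_p \\
&\leq \|J_1\|_1 + \|J_2\|_1 + \lambda\|z_1\|_p + \lambda\|z_2\|_p \\
&= C_{\lambda,p}(x,y) + C_{\lambda,p}(y,w),
\end{align*}
where the middle inequality is just the triangle inequality for the $\ell_1$ and $\ell_p$ norms applied separately. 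This is the crux, and it is clean precisely because the constraint is linear and additive in $(J,z)$ and the objective is a sum of norms.

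I do not expect a serious obstacle in the main argument; the linearity of the constraint $P^\top J = y - x + z$ makes the glueing immediate, so the triangle inequality reduces to subadditivity of the two norms. The one genuinely delicate point is identity of indiscernibles in the degenerate regime $\lambda = 0$, where $C_{0,p}(x,y) = 0$ whenever the $z$-slack can absorb all the mass difference at zero transport cost, so distinct $x,y$ can have zero distance; I would either state the proposition for $\lambda > 0$ or note explicitly that the $\lambda = 0$ case gives only a pseudometric and verify where the authors intend the claim to hold.
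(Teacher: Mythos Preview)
Your argument is correct and follows essentially the same route as the paper: feasibility of $(0,0)$ when $x=y$, the converse from $\|J\|_1=\|z\|_p=0$, and the triangle inequality by summing optimal $(J,z)$ pairs and applying subadditivity of the $\ell_1$ and $\ell_p$ norms. Your explicit symmetry argument via $(J,z)\mapsto(-J,-z)$ and your flag on the $\lambda=0$ edge case are both well taken; the paper's own proof glosses over $\lambda=0$ in exactly the way you anticipate, so your caveat is apt rather than excessive.
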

\begin{proof} It is immediate that $C_{\lambda, p}$ is nonnegative and symmetric. 

Let $x, y \in M(V)$ and suppose $x = y$. Then, $J = 0$ and $z = 0$ are feasible for $C_{\lambda, p}$, so $C_{\lambda, p} = 0$. Conversely, let $x, y \in M(V)$ and suppose $C_{\lambda, p} = 0$. Then, $J = 0$ and $z = 0$, so the constraint $P^\top J = y - x + z$ implies that $x = y$. 

Now, suppose $x, y, w \in M(V)$ and let
\begin{align*}
C_{\lambda, p}(x,y) &= \|J_{xy}\|_1 + \lambda\|z_{xy}\|_p \\
C_{\lambda, p}(x,w) &= \|J_{xw}\|_1 + \lambda\|z_{xw}\|_p \\
C_{\lambda, p}(w,y) &= \|J_{wy}\|_1 + \lambda\|z_{wy}\|_p
\end{align*}
Then, 
\begin{equation*}
P^\top(J_{xw} + J_{wy}) = y - x + z_{xw} + z_{wy},
\end{equation*}
so $J_{xw} + J_{wy}$ and $v_{xw} + v_{wy}$ are feasible for $C_{\lambda, p}(x,y)$. Since $J_{xy}$ and $v_{xy}$ are optimizers for $C_{\lambda, p}(x, y)$, we have
\begin{align*}
\|J_{xy}\|_1 + \lambda\|v_{xy}\|_p &\leq \|J_{xz} + J_{zy}\|_1 + \lambda\|v_{xz} + v_{zy}\|_p \\
&\leq \|J_{xz}\|_1 + \|J_{zy}\|_1 + \lambda\|v_{xz}\|_p + \lambda\|v_{zy}\|_p.
\end{align*}
Therefore, 
\begin{align*}
C_{\lambda, p}(x,y) \leq C_{\lambda, p}(x,z) + C_{\lambda, p}(z,y),
\end{align*}
so $C_{\lambda, p}$ satisfies the triangle inequality. 
\end{proof}

\subsection{Distances between unbalanced partitions}\label{subsec:unbalanced_parition_distances}

In the previous section, we defined a distance on the space of unbalanced partition components. In this section, we extend the lifted distance construction from \cref{subsec:balanced_part_distances} to a general lifted distance between potentially unbalanced partitions. 

\begin{definition}[Unbalanced lifted distance]
The \emph{unbalanced lifted distance} $$A_{\lambda, p} : M(V)^{k*} \times M(V)^{k*} \to \R$$ between partitions $X$ and $Y$ is defined as 
\begin{equation} \label{eqn:unbalanced_lp} A_{\lambda, p}(X, Y) = \left\{
\begin{array}{rl}
  \min\limits_{S\in \R^{k \times k}} & \sum\limits_{ij} S_{ij} C_{\lambda, p}(x_i, y_j) \\
                 \st & S\in\DS_k,
\end{array}\right.
\end{equation}
where $X = (x_1, \hdots, x_k), Y = (y_1, \hdots, y_k) \in M(V)^{k*}$ and $C_{\lambda, p}$ is the unbalanced distance between partitions in \eqref{eqn:unbalanced_cost}. 
\end{definition}

Just as in the balanced case, the unbalanced lifted distance uses a linear assignment problem to find a minimum-cost matching of the partition components. The unbalanced lifted distance induces a valid metric on the space of unbalanced graph partitions. 

\begin{proposition}\label{prop:unbalanced_metric} The unbalanced lifted distance $A_{\lambda, p}$ defined in \eqref{eqn:unbalanced_lp} is a metric on $M(V)^{k*}$. \end{proposition}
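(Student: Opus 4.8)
The plan is to recognize that $A_{\lambda,p}$ is nothing but the lifting construction of the Lifted distance definition, applied with underlying space $M(V)$ in place of $\Prob(V)$ and with ground metric $C = C_{\lambda,p}$. Since the preceding proposition already establishes that $C_{\lambda,p}$ is a metric on $M(V)$, and since the proof of \cref{prop:plan_metric} invoked only the metric axioms of the ground cost $C$ together with the extreme-point structure of the Birkhoff polytope---never the fact that the representatives were probability distributions---I would argue that the identical argument applies \emph{mutatis mutandis}. In other words, I would reprove the three metric axioms verbatim, substituting $C_{\lambda,p}$ for $C$ and $M(V)^{k*}$ for $\Prob(V)^{k*}$ throughout.

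Concretely, I would proceed axiom by axiom. Nonnegativity and symmetry are immediate from the nonnegativity of $C_{\lambda,p}$ and of the entries of any $S \in \DS_k$, together with the symmetry of $C_{\lambda,p}$. For identity of indiscernibles, if $X \equiv Y$ in $M(V)^{k*}$ then some permutation matrix sends each $x_i$ to an equal $y_j$, giving objective value $\sum_{ij} S_{ij} C_{\lambda,p}(x_i,y_j) = 0$ and hence $A_{\lambda,p}(X,Y) = 0$; conversely, if $A_{\lambda,p}(X,Y) = 0$ I would take an optimal $S$ that is a permutation matrix and note that a vanishing objective forces $C_{\lambda,p}(x_i,y_j) = 0$ on the support of $S$, whence $x_i = y_j$ there and $X \equiv Y$. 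For the triangle inequality, given $X,Y,Z$ I would select optimal permutation matrices $N$ and $W$ realizing $A_{\lambda,p}(X,Y)$ and $A_{\lambda,p}(Y,Z)$, apply the triangle inequality of $C_{\lambda,p}$ entrywise exactly as in \cref{prop:plan_metric}, and observe that the composite permutation $B = NW$ is feasible for the $A_{\lambda,p}(X,Z)$ program, yielding $A_{\lambda,p}(X,Z) \le A_{\lambda,p}(X,Y) + A_{\lambda,p}(Y,Z)$.

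The only place where I expect the argument might require care is the step that replaces a general minimizing doubly stochastic matrix by a permutation matrix. This reduction is justified because \eqref{eqn:unbalanced_lp} is a linear program over $\DS_k$, whose extreme points are exactly the permutation matrices by the Birkhoff--von Neumann theorem, so an optimum is attained at a permutation---provided every coefficient $C_{\lambda,p}(x_i,y_j)$ is finite and the program is bounded. Here this causes no difficulty: unlike $W_1$, which takes the value $+\infty$ when two components carry different total mass, the unbalanced cost $C_{\lambda,p}$ is always finite because the slack variable $z$ makes $(J,z) = (0,\, x_i - y_j)$ feasible in \eqref{eqn:unbalanced_cost} for every pair of components. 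Consequently all entries of the cost matrix are finite, the linear program is bounded below by $0$, and the reduction to permutation matrices goes through, so the proof of \cref{prop:plan_metric} carries over without further modification.
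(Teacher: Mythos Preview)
Your proposal is correct and takes essentially the same approach as the paper, which simply states that the proof ``follows from the proof of \cref{prop:plan_metric}'' without further elaboration. Your additional remark that $C_{\lambda,p}$ is always finite (in contrast to $W_1$) is a useful clarification that the paper omits, but the core argument is identical.
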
 
The proof of \cref{prop:unbalanced_metric} follows from the proof of \cref{prop:plan_metric}.

\subsection{Basic properties}\label{subsec:unbalanced_basic_properties}

In this section, we prove several properties of the unbalanced lifted distance $\eqref{eqn:unbalanced_lp}$. First, we show that both the distance $C_{\lambda, p}$ and the lifted distance $A_{\lambda, p}$ are monotonic in $\lambda$:

\begin{proposition} 
\label{prop:cost_monotonic}
The unbalanced cost function $C_{\lambda, p}$ is monotonic in $\lambda$.
\end{proposition}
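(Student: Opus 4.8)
The plan is to prove the natural reading of the statement, namely that $\lambda \mapsto C_{\lambda,p}(x,y)$ is nondecreasing: charging more for the slack variable $z$ can only raise the optimal value. The whole argument rests on one structural observation about the program \eqref{eqn:unbalanced_cost}. The feasible set $\{(J,z)\in\R^{|E|}\times\R^{|V|} : P^\top J = y - x + z\}$ does not depend on $\lambda$ at all, and $\lambda$ enters the objective $\|J\|_1 + \lambda\|z\|_p$ only as the coefficient of the nonnegative term $\|z\|_p$. So this is a standard parametric-monotonicity situation where the constraint set is fixed and the objective is pointwise monotone in the parameter.

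Concretely, first I would fix $0 \le \lambda_1 \le \lambda_2$ and let $(J^*, z^*)$ be an optimizer for the problem defining $C_{\lambda_2,p}(x,y)$. Since the constraint $P^\top J = y - x + z$ makes no reference to $\lambda$, the same pair $(J^*, z^*)$ is feasible for the problem defining $C_{\lambda_1,p}(x,y)$. I would then chain the two inequalities
\begin{align*}
C_{\lambda_1,p}(x,y) &\le \|J^*\|_1 + \lambda_1 \|z^*\|_p \\
&\le \|J^*\|_1 + \lambda_2 \|z^*\|_p = C_{\lambda_2,p}(x,y),
\end{align*}
where the first step uses that $(J^*,z^*)$ is feasible (hence suboptimal or optimal) for the $\lambda_1$ problem, and the second uses $\lambda_1 \le \lambda_2$ together with $\|z^*\|_p \ge 0$. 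This yields $C_{\lambda_1,p}(x,y) \le C_{\lambda_2,p}(x,y)$, which is the claimed monotonicity.

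I do not expect any genuine obstacle here; the content is the single observation that the feasible region is $\lambda$-independent while the $\lambda$-dependent summand is nonnegative. The only side points worth recording are that a minimizer is attained, so that the comparison above is between genuine optima rather than infima: the program is always feasible (take $J=0$ and $z = x-y$, since $z$ is unconstrained in sign), is bounded below by $0$, and for $\lambda>0$ has a coercive convex objective over a closed affine feasible set, so a minimizer exists. If a stronger conclusion is ever needed downstream, the very same observation shows that $\lambda \mapsto C_{\lambda,p}(x,y)$ is in fact concave, being a pointwise minimum over the fixed feasible set of the affine-in-$\lambda$ maps $\lambda \mapsto \|J\|_1 + \lambda\|z\|_p$, each having nonnegative slope $\|z\|_p$; I would include this remark only if the concavity is used later.
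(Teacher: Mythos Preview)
Your proposal is correct and is essentially the same argument as the paper's own proof: take an optimizer $(J^*,z^*)$ for the larger $\lambda$, note it is feasible for the smaller $\lambda$ since the constraint set is $\lambda$-independent, and chain the two obvious inequalities. Your additional remarks on attainment and concavity are sound but not used in the paper.
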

\begin{proof}

Suppose $\lambda_1, \lambda_2 \geq 0$ and $\lambda_2 > \lambda_1$. Let $J_{2}$ and $z_{2}$ be optimizers for $C_{\lambda_2, p}(x_i, y_j)$.
Then, $J_{2}$ and $z_{2}$ are feasible for $C_{\lambda_1, p}(x_i, y_j)$, so 
\begin{align*} 
C_{\lambda_1, p}(x_i, y_j) 
\leq \sum_{e} |J_{2}^e| + \lambda_1 \|z_{2}\|_p 
\leq \sum_{e} |J_{2}^e| + \lambda_2 \|z_2\|_p 
\leq C_{\lambda_2, p}(x_i, y_j). \end{align*}
Therefore, if $\lambda_2 > \lambda_1$, we have $C_{\lambda_2, p} \geq C_{\lambda_1, p}$. 
\end{proof}
\begin{corollary}
The unbalanced lifted distance $A_{\lambda, p}$ is monotonic in $\lambda$.
\end{corollary}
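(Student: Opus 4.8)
The plan is to deduce the monotonicity of $A_{\lambda, p}$ in $\lambda$ directly from the monotonicity of the component cost $C_{\lambda, p}$ established in \cref{prop:cost_monotonic}. The key observation is that the lifted distance \eqref{eqn:unbalanced_lp} is a minimum over the fixed feasible set $\DS_k$ of an objective that is a nonnegatively weighted sum of the component costs $C_{\lambda, p}(x_i, y_j)$, and that taking a minimum preserves pointwise inequalities between objectives over a common feasible set.

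Concretely, suppose $\lambda_2 > \lambda_1 \geq 0$. First I would let $S^*$ be an optimizer for $A_{\lambda_2, p}(X, Y)$, so that $S^* \in \DS_k$ and $A_{\lambda_2, p}(X, Y) = \sum_{ij} S^*_{ij} C_{\lambda_2, p}(x_i, y_j)$. Because $S^*$ is feasible for the problem defining $A_{\lambda_1, p}(X, Y)$ as well (the feasible set $\DS_k$ does not depend on $\lambda$), it follows that $A_{\lambda_1, p}(X, Y) \leq \sum_{ij} S^*_{ij} C_{\lambda_1, p}(x_i, y_j)$. Now I would invoke \cref{prop:cost_monotonic}, which gives $C_{\lambda_1, p}(x_i, y_j) \leq C_{\lambda_2, p}(x_i, y_j)$ for every pair $(i,j)$, and combine this with the fact that each entry $S^*_{ij} \geq 0$ (since $S^* \in \DS_k$). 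Multiplying the componentwise inequality by the nonnegative weights $S^*_{ij}$ and summing yields $\sum_{ij} S^*_{ij} C_{\lambda_1, p}(x_i, y_j) \leq \sum_{ij} S^*_{ij} C_{\lambda_2, p}(x_i, y_j) = A_{\lambda_2, p}(X, Y)$. Chaining the two inequalities gives $A_{\lambda_1, p}(X, Y) \leq A_{\lambda_2, p}(X, Y)$, which is the claimed monotonicity.

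I do not expect any serious obstacle here: the argument is the standard principle that a minimum over a fixed domain is monotone in the integrand, and the only facts needed are nonnegativity of doubly stochastic entries and the already-proven \cref{prop:cost_monotonic}. The one point requiring mild care is making sure the optimizer of the larger-$\lambda$ problem is reused as a feasible (not necessarily optimal) point for the smaller-$\lambda$ problem, rather than trying to match optimizers across the two values of $\lambda$; comparing against the optimizer of $A_{\lambda_2, p}$ is what makes the inequality go through cleanly in the right direction. Since this is stated as a corollary, a two- or three-line proof along these lines suffices.
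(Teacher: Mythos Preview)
Your proposal is correct and follows exactly the same approach as the paper: take an optimizer $S^*$ for $A_{\lambda_2,p}$, use it as a feasible point for $A_{\lambda_1,p}$, and apply \cref{prop:cost_monotonic} together with nonnegativity of the entries of $S^*$ to chain the inequalities. The paper's proof is the same two-line argument, with $S_2$ in place of your $S^*$.
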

\begin{proof}
Suppose $\lambda_1, \lambda_2 \geq 0$ and $\lambda_2 > \lambda_1$. Let $S_2$ be an optimizer for $A_{\lambda_2, p}$. Since $S_2$ is feasible for $A_{\lambda_1, p}$, 
\begin{equation*} A_{\lambda_1, p} \leq \sum_{ij} S_{2ij} C_{\lambda_1, p}(x_i, y_j) \leq \sum_{ij} S_{2ij}C_{\lambda_2, p}(x_i, y_j) = A_{\lambda_2, p}, \end{equation*}
as desired.
\end{proof}
In the following proposition, we show that the norm of the optimizer of the mass difference $z$ for $C_{\lambda, p}$ is monotonic in $\lambda$:
\begin{proposition} 
\label{prop:inf_node_monotonic}
The $p$-norm of the optimizer $z_{\lambda}$ for the unbalanced cost function $C_{\lambda, p}$ is monotonic in $\lambda$. 
\end{proposition}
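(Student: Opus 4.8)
The plan is to use the standard ``exchange'' (comparative-statics) argument: the feasible set of \eqref{eqn:unbalanced_cost} does not depend on $\lambda$, so any optimizer for one value of $\lambda$ is automatically feasible for every other value, and adding two optimality inequalities will pin down the direction in which the penalized quantity $\|z_\lambda\|_p$ moves. Concretely, I would fix $\lambda_2 > \lambda_1 \geq 0$ and let $(J_1, z_1)$ and $(J_2, z_2)$ denote optimizers of $C_{\lambda_1, p}(x_i, y_j)$ and $C_{\lambda_2, p}(x_i, y_j)$, respectively. Existence of these optimizers can be noted just as in the preceding propositions: the constraint defines a nonempty affine set (given any $J$, one may set $z = P^\top J - (y_j - x_i)$), and for $\lambda > 0$ the objective $\|J\|_1 + \lambda\|z\|_p$ is continuous and coercive, hence attains its minimum on this closed set.

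The key step is that both optimizers are feasible for both problems, since the constraint $P^\top J = y_j - x_i + z$ is independent of $\lambda$. Optimality of $(J_2, z_2)$ for $C_{\lambda_2, p}$ against the competitor $(J_1, z_1)$ gives $\|J_2\|_1 + \lambda_2\|z_2\|_p \leq \|J_1\|_1 + \lambda_2\|z_1\|_p$, while optimality of $(J_1, z_1)$ for $C_{\lambda_1, p}$ against $(J_2, z_2)$ gives $\|J_1\|_1 + \lambda_1\|z_1\|_p \leq \|J_2\|_1 + \lambda_1\|z_2\|_p$. Adding these and cancelling the common $\|J_1\|_1$ and $\|J_2\|_1$ terms leaves $\lambda_2\|z_2\|_p + \lambda_1\|z_1\|_p \leq \lambda_2\|z_1\|_p + \lambda_1\|z_2\|_p$, which rearranges to $(\lambda_2 - \lambda_1)\|z_2\|_p \leq (\lambda_2 - \lambda_1)\|z_1\|_p$. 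Dividing by $\lambda_2 - \lambda_1 > 0$ yields $\|z_2\|_p \leq \|z_1\|_p$, so $\|z_\lambda\|_p$ is nonincreasing in $\lambda$, which is the asserted monotonicity.

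I do not anticipate a serious obstacle, as this is the routine ``add two optimality inequalities'' trick; the one point needing care is that the optimizer $z_\lambda$ need not be unique, so \emph{a priori} ``the'' optimizer is ambiguous. This is resolved cleanly by observing that the two inequalities above hold for \emph{every} pair of optimizers, so the bound $\|z_2\|_p \leq \|z_1\|_p$ is valid for any selection, and the monotonicity statement is therefore well posed regardless of which optimizers are chosen. (Should one wish to allow $\lambda_1 = 0$, where coercivity in $z$ degenerates, existence of a minimizer at $\lambda_1$ still holds because the problem reduces to the ordinary min-cost-flow program, and the identical two inequalities apply.)
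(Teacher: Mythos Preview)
Your proof is correct and is essentially the paper's own argument: both fix $\lambda_2>\lambda_1$, use that optimizers for one value of $\lambda$ are feasible for the other, and combine the two optimality inequalities to force $(\lambda_2-\lambda_1)(\|z_2\|_p-\|z_1\|_p)\leq 0$. The only cosmetic difference is that the paper rearranges via the intermediate quantity $\|J_1\|_1-\|J_2\|_1$ whereas you add the two inequalities directly; your extra remarks on existence and non-uniqueness of optimizers are welcome but not needed for the paper's level of rigor.
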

\begin{proof}
Suppose $\lambda_1, \lambda_2 \geq 0$ and $\lambda_2 > \lambda_1$. Let $J_{1}$ and $z_{1}$ be optimizers for $C_{\lambda_1, p}$, and let $J_{2}$ and $z_{2}$ be optimizers for $C_{\lambda_2, p}$. Since $J_{2}$ and $z_{2}$ are feasible for $C_{\lambda_1, p}$,
\begin{equation*}
\|J_{1}\|_1 + \lambda_1\|z_{1}\|_p \leq \|J_{2}\|_1 + \lambda_1\|z_{2}\|_p \end{equation*}
and thus 
\begin{equation}\label{eqn:lambda_upperbound} \|J_{1}\|_1 - \|J_{2}\|_1 \leq \lambda_1(\|z_{2}\|_p - \|z_{1}\|_p). \end{equation}

By an identical argument,
\begin{equation}\label{eqn:lambda_lowerbound}
\|J_{1}\|_1 - \|J_{2}\|_1 \geq \lambda_2(\|z_{2}\|_p - \|z_{1}\|_p).
\end{equation}
Combining \eqref{eqn:lambda_upperbound} and \eqref{eqn:lambda_lowerbound} gives
$
\lambda_2(\|z_{2}\|_p - \|z_{1}\|_p) \leq \lambda_1(\|z_{2}\|_p - \|z_{1}\|_p).
$ 
Since $\lambda_2 > \lambda_1$ by assumption, we must have $\|z_{2}\|_p - \|z_{1}\|_p \leq 0$, as needed. 
\end{proof}

In \cref{prop:simplified_lp}, we give a formulation of the balanced lifted distance as a combined linear program. In the following proposition, we prove a combined convex program for the unbalanced lifted distance:

\begin{proposition} \label{prop:unbalanced_simplified_lp}
The unbalanced lifted distance $A_{\lambda, p}(X, Y)$ between partitions $X,Y$ satisfies
\begin{equation}\label{eqn:unbalanced_combined}
A_{\lambda, p}(X, Y) = \left\{
\begin{array}{rl}
\min\limits_{\substack{Q \in \R^{|E| \times k^2} \\ S \in \R^{k \times k} \\ u \in \R^{|V| \times k^2}}} &\sum\limits_{ij} \left(\sum\limits_{e\in E} | Q_{ij}^e| + \lambda \|u_{ij}\|_p\right)\\
\st
&S\in\DS_k\\
& P^\top Q_{ij} - (x_i - y_j)S_{ij} = u_{ij}.
\end{array}
\right.
\end{equation}
\end{proposition}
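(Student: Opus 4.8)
The plan is to mirror the proof of \cref{prop:simplified_lp}, now carrying along the slack variable $z$ and the penalty $\lambda\|z\|_p$. First I would substitute the definition \eqref{eqn:unbalanced_cost} of the component cost $C_{\lambda,p}$ into the lifted distance \eqref{eqn:unbalanced_lp}. This produces a nested minimization: an outer minimization over $S\in\DS_k$ and, for each pair $(i,j)$, an inner minimization over $(J_{ij},z_{ij})$ subject to $P^\top J_{ij}=y_j-x_i+z_{ij}$. Because the inner problems are independent across $(i,j)$ once $S$ is fixed, and both the inner and outer problems are minimizations, I can collapse them into a single joint minimization over $S$ together with all the $J_{ij}$ and $z_{ij}$, with objective $\sum_{ij} S_{ij}\big(\|J_{ij}\|_1+\lambda\|z_{ij}\|_p\big)$.

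Next I would perform the change of variables $Q_{ij}=S_{ij}J_{ij}$ and $u_{ij}=S_{ij}z_{ij}$, exactly as $Q=SJ$ was introduced in \cref{prop:simplified_lp}. The key algebraic fact is that $S_{ij}\geq 0$, so the scalar can be pulled inside each norm: $S_{ij}\|J_{ij}\|_1=\|Q_{ij}\|_1=\sum_e|Q_{ij}^e|$ and $\lambda S_{ij}\|z_{ij}\|_p=\lambda\|u_{ij}\|_p$. Multiplying the constraint $P^\top J_{ij}=y_j-x_i+z_{ij}$ through by $S_{ij}$ turns it into the linear constraint $P^\top Q_{ij}=(y_j-x_i)S_{ij}+u_{ij}$; using that $C_{\lambda,p}$ is symmetric (so $J,z$ may be replaced by $-J,-z$ without changing the cost) lets me write this in the stated form $P^\top Q_{ij}-(x_i-y_j)S_{ij}=u_{ij}$. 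After this substitution the bilinear objective has become the convex objective in \eqref{eqn:unbalanced_combined}, and the constraints are linear in $(Q,S,u)$, so the resulting program is convex (though, unlike the balanced case, not linear once $p>1$, because of the $\ell_p$ terms).

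The main obstacle is to argue that this change of variables preserves the optimal value, and the only delicate point is that the map $(S,J,z)\mapsto(S,Q,u)$ is not invertible where $S_{ij}=0$. For the forward inequality, any feasible $(S,J,z)$ yields a feasible $(Q,S,u)$ with equal objective, so the combined program is a lower bound. For the reverse inequality I would start from an optimal $(Q,S,u)$ and reconstruct $J_{ij}=Q_{ij}/S_{ij}$, $z_{ij}=u_{ij}/S_{ij}$ on the entries with $S_{ij}>0$; dividing the constraint by $S_{ij}$ recovers feasibility of $(J_{ij},z_{ij})$ for $C_{\lambda,p}(x_i,y_j)$ and the per-pair objective equals $S_{ij}C_{\lambda,p}(x_i,y_j)$. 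On the entries with $S_{ij}=0$, the constraint decouples to $P^\top Q_{ij}=u_{ij}$ and the term $\|Q_{ij}\|_1+\lambda\|u_{ij}\|_p$ is minimized by $Q_{ij}=u_{ij}=0$; an optimal combined solution therefore contributes zero on these entries, matching the vanishing term $S_{ij}C_{\lambda,p}(x_i,y_j)=0$ in the original objective. Combining the two directions gives equality of the optimal values, completing the proof.
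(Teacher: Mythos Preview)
Your proposal is correct and follows essentially the same approach as the paper: substitute \eqref{eqn:unbalanced_cost} into \eqref{eqn:unbalanced_lp}, collapse the nested minimizations, and apply the substitutions $Q_{ij}=S_{ij}J_{ij}$ and $u_{ij}=S_{ij}z_{ij}$. If anything, you are more careful than the paper, which performs the substitution without explicitly addressing the $S_{ij}=0$ case or the sign convention.
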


\begin{proof} This proof proceeds identically to the proof of \cref{prop:simplified_lp}.
Plugging the unbalanced transport cost \eqref{eqn:unbalanced_cost} into the unbalanced lifted distance \eqref{eqn:unbalanced_lp}, we get 
$$
A_{\lambda, p}(X, Y) = 
\left\{
\begin{array}{rl}
\min\limits_{S\in \R^{k\times k}} &\sum\limits_{ij} S_{ij} \left(\sum\limits_{e\in E} | J_{ij}^{*e}| + \lambda \|z_{ij}^*\|_p\right)\\
\st &S\in\DS_k\\
& J_{ij}^*, z_{ij}^* = 
\left\{
\begin{array}{rl}
\argmin\limits_{J_{ij}, z_{ij}} & \sum\limits_{e\in E} |J_{ij}^e| + \lambda \|z_{ij}\|_p\\
\st &  P^\top J_{ij} = x_i - y_j + z_{ij}.
\end{array}
\right.
\end{array}\right.
$$
Since the inner and outer problems are both minimizations, this simplifies to 
$$
A_{\lambda, p}(X, Y) =
\left\{
\begin{array}{rl}
\min\limits_{\substack{S\in \R^{k\times k}\\ J\in \R^{|E|\times k^2}\\ z\in \R^{|V|\times k}}} &\sum\limits_{ij} S_{ij} \left(\sum\limits_{e\in E} | J_{ij}^{e}| + \lambda \|z_{ij}\|_p\right)\\
\st &S\in\DS_k\\
& P^\top J_{ij} = x_i - y_j + z_{ij}
\end{array}
\right.
$$
The variable substitutions $Q_{ij}^e = S_{ij}J_{ij}^e$ and $u_{ij} = S_{ij}z_{ij}$ give the desired result. 
\end{proof}

The dual of \eqref{eqn:unbalanced_combined} is given by 
\begin{equation} 
    A_{\lambda, p}(X,Y) = 
    \left\{
    \begin{array}{rl}
    \max\limits_{\phi, \psi \in \R^k, \gamma \in \R^{|V| \times k^2}}   &\1^\top(\phi + \psi) \\
    \st & \phi_i + \psi_j \leq \gamma_{ij}^\top(x_i - y_j)\\
    & |\gamma_{ij}^{w} - \gamma_{ij}^{v}| \leq 1 \ \forall \ (w, v) \in E \\
    & \|\gamma_{ij}\|_q \leq \lambda
    \end{array}
    \right.
\end{equation}
where the $q$-norm, satisfying $q = \begin{cases} \frac{p}{p-1} & p > 1 \\ \infty & p = 1 \end{cases}$ is the dual of the $p$-norm in the objective of the primal problem. Slater's condition for duality \cite{slater1950lagrange} gives that strong duality holds for \eqref{eqn:unbalanced_combined}.  We can verify that Slater's condition holds for \eqref{eqn:unbalanced_combined} since we can write the inner problem \eqref{eqn:unbalanced_cost} as

\begin{equation}
  C_{\lambda, p}(x_i,y_j)= \left\{
    \begin{array}{rl}
    \min\limits_{\substack{J\in \R^{|E|\times k^2} \\ z \in \R^{|V|} \\ m\in \R}} & \|J\|_1 + \lambda m\\
    \st & P^\top J = y_j-x_i + z\\
        & m\geq ||z||_p.
    \end{array}\right.
\end{equation}

The final constraint is the only nonlinear one, and for any solution satisfying the linear constraints, we can choose $m$ to be large enough that this solution satisfies the nonlinear constraint with strict inequality.  Using this as the inner problem to the linear program which computes the optimal matching does not introduce any additional nonlinear constraints, so Slater's condition is satisfied for \eqref{eqn:unbalanced_combined} and strong duality holds.

Next, we show that for balanced partitions, the general distance is an extension of the balanced distance. Specifically, if we take $p = 1$, there exists $\lambda$ sufficiently large such that the unbalanced distance between two balanced partition components is equal to the balanced distance. The following proposition formalizes this notion:

\begin{proposition}
\label{prop:large_lambda}
If $X$ and $Y$ are balanced partitions of a graph $G$ and $\lambda \geq \diam(G)/2$, then $C_{\lambda, 1}(x_i, y_j) = W_1(x_i, y_j)$ for $x_i \in X$, $y_j \in Y$. 
\end{proposition}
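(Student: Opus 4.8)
The plan is to prove the two inequalities $C_{\lambda,1}(x_i,y_j) \le W_1(x_i,y_j)$ and $C_{\lambda,1}(x_i,y_j) \ge W_1(x_i,y_j)$ separately. The first is immediate and in fact holds for every $\lambda \ge 0$: whenever $J$ is feasible for the balanced problem \eqref{eq:w1graph}, the pair $(J,0)$ is feasible for the unbalanced problem \eqref{eqn:unbalanced_cost}, and the extra term $\lambda\|0\|_1$ vanishes, so the unbalanced minimum cannot exceed the balanced one. All the work therefore lies in the reverse inequality, and this is where the hypothesis $\lambda \ge \diam(G)/2$ must enter.

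For the reverse direction, the idea is to take an optimal pair $(J,z)$ for $C_{\lambda,1}(x_i,y_j)$ and convert it into a feasible balanced flow of no greater cost. First I would record that the slack $z$ has zero total mass: since each row of $P$ has one $+1$ and one $-1$, we have $\1^\top P^\top = 0$, so summing the constraint $P^\top J = y_j - x_i + z$ over all vertices gives $\1^\top z = \1^\top y_j - \1^\top x_i = 1 - 1 = 0$. (This is the one place where the balancedness of $X$ and $Y$ is used.) Writing $z = z^+ - z^-$ for its positive and negative parts, the two nonnegative vectors then share a common total mass $s = \tfrac12\|z\|_1$.

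Next I would build a flow $K \in \R^{|E|}$ with $P^\top K = z$ by transporting the mass of $z^-$ onto $z^+$: choose any coupling $\pi$ between $z^-$ and $z^+$ and route each $\pi(v,w)$ units of mass along a shortest path from $v$ to $w$. A check of the sign convention for $P$ shows the resulting $K$ satisfies $P^\top K = z^+ - z^- = z$, and since each unit of mass travels distance at most $\diam(G)$ while the total mass moved is $s$, we obtain $\|K\|_1 \le \sum_{v,w} d(v,w)\,\pi(v,w) \le \diam(G)\cdot s = \tfrac{\diam(G)}{2}\|z\|_1$. Setting $J' = J - K$ then gives $P^\top J' = (y_j - x_i + z) - z = y_j - x_i$, so $J'$ is feasible for the balanced problem, and
$$\|J'\|_1 \le \|J\|_1 + \|K\|_1 \le \|J\|_1 + \tfrac{\diam(G)}{2}\|z\|_1 \le \|J\|_1 + \lambda\|z\|_1 = C_{\lambda,1}(x_i,y_j),$$
where the final inequality uses $\lambda \ge \diam(G)/2$. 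Hence $W_1(x_i,y_j) \le \|J'\|_1 \le C_{\lambda,1}(x_i,y_j)$, and combining with the easy direction yields equality.

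I expect the main obstacle to be the construction and cost bound for $K$ in the third paragraph, and in particular getting the factor of $\tfrac12$ right: it arises because $\|z\|_1$ counts both the mass removed ($z^-$) and the mass inserted ($z^+$), whereas the rerouting only needs to carry the common amount $s = \tfrac12\|z\|_1$ across the graph. A minor technical point to handle with care is that shortest-path flows originating from different coupled pairs may partially cancel on shared edges, so the bound on $\|K\|_1$ is genuinely an inequality $\|K\|_1 \le \sum_{v,w} d(v,w)\,\pi(v,w)$ rather than an equality; this only works in our favor. The degenerate case $s = 0$ (where $z = 0$ and $K = 0$ works trivially) should be noted but is harmless. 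Everything else is routine.
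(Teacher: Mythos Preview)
Your proposal is correct and follows essentially the same approach as the paper: take an optimal $(J,z)$ for the unbalanced problem, use balancedness to conclude $\1^\top z = 0$, transport $z^-$ onto $z^+$ to obtain a correction flow of cost at most $\tfrac{\diam(G)}{2}\|z\|_1$, and combine it with $J$ to get a feasible flow for $W_1$ of no greater cost. Your handling of the sign conventions and the factor of $\tfrac12$ is in fact cleaner than the paper's own write-up.
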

\begin{proof}

Suppose that $\bar{J}$ and $\bar{z}$ are optimizers for $C_{\lambda, 1}(x_i, y_j)$. We know that 
\begin{equation*} \sum_{v \in V} x_i(v) + \sum_{v \in V} \bar{z}(v) = \sum_{v \in V} y_j(v), \end{equation*} 
so when $X$ and $Y$ are balanced, $\sum_{v \in V} \bar{z}(v) = 0$. 
Let $\bar{z}^+$ and $\bar{z}^-$ be defined such that 
\begin{equation*} \bar{z}^+(v) = \begin{cases} z(v) & z(v) > 0 \\ 0 & \text{otherwise} \end{cases} \ \ \text{and} \ \ \ \bar{z}^-(v) = \begin{cases} |z(v)| & z(v) < 0 \\ 0 & \text{otherwise} \end{cases}. \end{equation*}
Let $J^*$ represent the optimal transport plan between $\bar{z}^-$ and $\bar{z}^+$. Then,
\begin{equation*} \|J^*\|_1 \leq \diam(G) \sum_{v \in V} |\bar{z}^-(v)| = \frac{\diam(G)}{2} \sum_{v \in V} |\bar{z}(v)|.\end{equation*}
Therefore, 
\begin{equation}\label{eqn:upper} \|\bar{J}\|_1 + \|J^*\|_1 \leq \|\bar{J}\|_1 + \frac{\diam(G)}{2} \|\bar{z}\|_1 \leq C_{\lambda, 1}(x_i, y_j). \end{equation} 
Since \begin{equation*}P^T(\bar{J} + J^*) = (x_i - y_j + \bar{z}) + (\bar{z}^- - \bar{z}^+) = x_i - y_j,\end{equation*} $\bar{J} + J^*$ is feasible for $W_1(x_i, y_j)$, so 
\begin{equation}\label{eqn:lower} W_1(x_i, y_j) \leq \|\bar{J} + J^*\|_1 \leq \|\bar{J}\|_1 + \|J^*\|_1. \end{equation}
Combining \eqref{eqn:upper} and \eqref{eqn:lower}, we get $W_1(x_i, y_j) \leq C_{\lambda, 1}(x_i, y_j)$. It is immediate that in the balanced case, $C_{\lambda, 1}(x_i, y_j) \leq W_1(x_i, y_j)$, so $W_1(x_i, y_j) = C_{\lambda, 1}(x_i, y_j)$, as desired. 
\end{proof}
We can extend the previous result to show that for $\lambda \geq \diam(G)/2$, the unbalanced lifted distance is equal to the balanced lifted distance for balanced partitions. 
\begin{corollary}
If $X$ and $Y$ are balanced partitions of a graph $G$ and $\lambda \geq \diam(G)/2$, then $A_{\lambda, 1}(X, Y) = A(X, Y)$. 
\end{corollary}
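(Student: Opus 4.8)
The plan is to observe that this corollary is an immediate consequence of \cref{prop:large_lambda}, since both lifted distances are linear assignment problems over the \emph{same} feasible set---the Birkhoff polytope $\DS_k$---and differ only in their cost coefficients. So the entire argument reduces to showing that these cost coefficients coincide, which is exactly what \cref{prop:large_lambda} provides at the level of individual components.

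Concretely, I would first recall the two definitions side by side. By \eqref{eqn:unbalanced_lp} we have
\begin{equation*}
A_{\lambda, 1}(X, Y) = \min_{S \in \DS_k} \sum_{ij} S_{ij}\, C_{\lambda, 1}(x_i, y_j),
\end{equation*}
while by \eqref{eqn:A definition 1} (with $C = W_1$) we have
\begin{equation*}
A(X, Y) = \min_{S \in \DS_k} \sum_{ij} S_{ij}\, W_1(x_i, y_j).
\end{equation*}
Both are minimizations of a linear functional of $S$ over $\DS_k$; the only difference is whether the coefficient attached to $S_{ij}$ is $C_{\lambda, 1}(x_i, y_j)$ or $W_1(x_i, y_j)$.

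Next I would invoke the hypotheses: $X$ and $Y$ are balanced partitions and $\lambda \geq \diam(G)/2$, which are precisely the hypotheses of \cref{prop:large_lambda}. Applying that proposition to each pair of components $x_i \in X$ and $y_j \in Y$ gives $C_{\lambda, 1}(x_i, y_j) = W_1(x_i, y_j)$ for every $i$ and $j$. Therefore the two cost matrices entering the assignment problems are identical entry-by-entry, so for every feasible $S \in \DS_k$ the two objective values agree. Minimizing identical objectives over the same feasible set yields identical optimal values, which establishes $A_{\lambda, 1}(X, Y) = A(X, Y)$.

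I do not expect any genuine obstacle here: all the real work is carried out in \cref{prop:large_lambda}, and the corollary is a purely formal lifting of that componentwise equality through the definition of the assignment problem. The only point worth stating carefully is that the feasible set $\DS_k$ does not depend on $\lambda$ or on the choice of cost function, so equality of the objectives is enough to conclude equality of the minima (and, if desired, one notes that any optimal matching $S$ for one problem is automatically optimal for the other).
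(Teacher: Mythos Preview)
Your proposal is correct and takes essentially the same approach as the paper: both argue that \cref{prop:large_lambda} forces $C_{\lambda,1}(x_i,y_j)=W_1(x_i,y_j)$ for every pair, making the two assignment problems in \eqref{eqn:A definition 1} and \eqref{eqn:unbalanced_lp} identical. The paper's proof is simply a one-sentence version of what you wrote.
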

\begin{proof}
When $W_1(x_i, y_j) = C_{\lambda, 1}(x_i, y_j)$ for all $x_i \in X$, $y_j \in Y$, the definition of lifted distance given in \eqref{eqn:A definition 1} is the same as the definition of unbalanced lifted distance given in \eqref{eqn:unbalanced_lp}. 
\end{proof}

\subsection{The unbalanced transport problem when $p=1$}\label{section:unbalanced transport p=1}

In this case the unbalanced transport problem \eqref{eqn:unbalanced problem for components} is equivalent to a discrete version of the transportation problem with boundary studied by Figalli and Gigli \cite{figalli2010transportation}, which is to \eqref{eqn:unbalanced problem for components} what the Kantorovich problem in \eqref{eqn:transport metric definition} is to \eqref{eq:w1graph}. The idea is to modify the Kantorovich problem by adding vertices that serve as auxiliary infinite-capacity sinks/reservoirs that can receive or provide mass to compensate for unequal total masses between $x,y \in \M(V)$.

In our case, we expand the graph $G = (V,E,\omega)$ by adding just one extra vertex denoted $v_s$ to serve as the auxiliary vertex. We also add edges between every vertex and $v_s$, all with the the same weight $\lambda>0$. Concretely, we define $G_* = (V_*,E_*,\omega_*)$ as follows:
\begin{align*}
  V_* &:= V \cup \{ v_s\},\\
  E_* &:= E \cup V \times \{v_s\} \cup \{v_s\} \times V,\textrm{ and}\\
  \omega_*(e) &:= \left \{ \begin{array}{rl}
    \omega(e) & \text{ if } e \in E,\\
    \lambda & \text{ if } e \in E_* \setminus E.
  \end{array}\right.
\end{align*}
We denote by $d_{\lambda}(v,w)$ the resulting graph distance in $V^*$ (note that this simply extends the distance in $G$ via $d(v,v_s) = \lambda$ for every $v\in V$). 

Given $x,y \in \M(V)$, we will say that $\pi \in \M(V\times V)$ is an admissible transport plan for $x$ and $y$ with sink at $v_s$ if 
\begin{align*}
  \sum \limits_{w \in V_*} \pi(v,w) = x(v)\;\;\forall\;v \in V\;\;
  \textrm{and}
  \;\;\sum \limits_{v\in V_*} \pi(v,w) = y(w)\;\;\forall\;w\in V.
\end{align*}
This condition is similar to the usual Kantorovich problem from \eqref{eqn:transport metric definition}, except in our larger space we do not impose the marginal constraint at the auxiliary vertex $v_s$; this means we are free to move any amount of mass to, from, or through $v_s$ as needed. 
The set of such admissible plans will be denoted $\Pi_*(x,y)$. 

Then, the analogue of the Kantorovich problem is
\begin{align}\label{eqn:OPT boundary}
  \min \limits_{\pi \in \Pi_*(x,y)} \sum\limits_{v,w \in V_*} d_{\lambda}(v,w)\pi(v,w).
\end{align}
We will show that Problem \eqref{eqn:OPT boundary} is equivalent to Problem \eqref{eqn:unbalanced problem for components} when $p=1$. 
\begin{lemma}\label{lem:OPT boundary and OPT unbalanced are the same} 
  The minimum for Problem \eqref{eqn:OPT boundary} is the same as the minimum for the problem
  \begin{align}\label{eqn:Beckmann unbalanced}
    \min_{J \in \R^{|E|}}\hspace{5mm} &\|J\|_1 + \lambda \|z\|_1 \\ \st\hspace{5mm} &P^\top J = y - x + z. \notag
  \end{align}
  Moreover, from any $\pi$ which is a minimizer of Problem \eqref{eqn:OPT boundary} it is possible to construct a corresponding pair $(J,z)$ which is a minimizer for Problem \eqref{eqn:Beckmann unbalanced}.
  
\end{lemma}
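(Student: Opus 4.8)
The plan is to recognize \eqref{eqn:OPT boundary} as the Kantorovich problem and \eqref{eqn:Beckmann unbalanced} as the Beckmann (minimum-cost flow) problem on the \emph{same} augmented graph $G_*$, and then to reuse the standard equivalence between these two formulations (the correspondence between the $|V|^2$-variable and $|E|$-variable problems noted after \eqref{eq:w1graph}). Concretely, once one observes that a signed flow on $G_*$ splits into a flow $J$ on the original edges $E$ together with flows on the $\lambda$-weighted sink edges, and that the latter are recorded exactly by the slack vector $z$ (with $z(v)$ the net flow from $v$ into $v_s$, contributing cost $\lambda|z(v)|$), the Beckmann problem on $G_*$ becomes \eqref{eqn:Beckmann unbalanced} verbatim. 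So it suffices to prove that the optimal value of \eqref{eqn:OPT boundary} equals the optimal value of the Beckmann problem on $G_*$, and to exhibit the construction $\pi \mapsto (J,z)$.

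For the inequality ``Beckmann $\le$ Kantorovich'' together with the explicit construction, I would start from any admissible $\pi \in \Pi_*(x,y)$ and \emph{route} it: for each ordered pair $(v,w)$, send $\pi(v,w)$ units of flow along a fixed $d_\lambda$-shortest path $\gamma_{vw}$ from $v$ to $w$ in $G_*$, and superpose these contributions into a single signed flow $\tilde J$ on $E_*$. Summing the marginal constraints shows that the net inflow of $\tilde J$ at each real vertex equals $y-x$ (the constraint at $v_s$ being vacuous), so after the split $\tilde J = (J,z)$ above the pair $(J,z)$ is feasible for \eqref{eqn:Beckmann unbalanced}. The cost can only decrease under superposition, because flows on a common edge may cancel; quantitatively, $\|J\|_1 + \lambda\|z\|_1 = \sum_{e} \omega_*(e)|\tilde J_e| \le \sum_{v,w}\pi(v,w)\sum_{e \in \gamma_{vw}}\omega_*(e) = \sum_{v,w} d_\lambda(v,w)\pi(v,w)$, the last equality holding because each $\gamma_{vw}$ is a shortest path. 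Applying this to an \emph{optimal} $\pi$ produces a feasible $(J,z)$ whose cost is at most the optimal value of \eqref{eqn:OPT boundary}.

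For the reverse inequality ``Kantorovich $\le$ Beckmann'', I would take an optimal flow for the Beckmann problem on $G_*$ and invoke the flow-decomposition theorem to write it as a nonnegative combination of simple directed paths and cycles. Cycles can be discarded without increasing the cost, and each remaining path runs from a vertex of net supply to a vertex of net demand. Reading off $\pi(v,w)$ as the total mass of paths from $v$ to $w$ yields a plan whose off-diagonal mass is exactly that carried by the flow; reconciling the diagonal by setting $\pi(v,v)$ equal to the leftover unmoved mass makes $\pi$ admissible for $\Pi_*(x,y)$ at no extra cost, since $d_\lambda(v,v)=0$. Because $d_\lambda(v,w)$ is at most the length of any path joining $v$ and $w$, the Kantorovich cost of $\pi$ is bounded by the flow cost. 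Combining the two inequalities shows the optimal values coincide, and, as noted, the routing construction of the previous paragraph turns an optimizer of \eqref{eqn:OPT boundary} into an optimizer of \eqref{eqn:Beckmann unbalanced}, which is the ``moreover'' claim.

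The main obstacle is the flow-to-plan direction: one must argue carefully that discarding cycles is cost-free (or cost-reducing), that the path decomposition's endpoints line up with the supply/demand pattern $y-x$ at the real vertices (with $v_s$ acting as a free reservoir absorbing the imbalance), and that the diagonal reconciliation preserves both marginal constraints and nonnegativity. The plan-to-flow direction is more routine, the only subtlety being the cancellation inequality for superposed shortest-path flows. I would also record the structural identity $d_\lambda(v,w) = \min\big(d(v,w),\, 2\lambda\big)$, which makes transparent that routing through $v_s$ amounts to removing mass at $v$ and reinserting it at $w$, each at cost $\lambda$, and which connects this lemma to the threshold $\lambda \ge \diam(G)/2$ appearing in \cref{prop:large_lambda}.
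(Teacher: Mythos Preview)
Your plan is correct, and the plan-to-flow direction (routing $\pi$ along fixed shortest paths and splitting the resulting $E_*$-flow into $(J,z)$) is exactly what the paper does, with the same cost computation; the paper packages the admissibility check as a separate proposition showing $P^\top J_\pi = y - x + z_\pi$. The difference is in the reverse direction. The paper does \emph{not} use flow decomposition; instead it argues via LP duality: it takes an admissible dual pair $(\phi,\psi)$ for problem \eqref{eqn:OPT boundary} (functions on $V_*$ vanishing at $v_s$ with $\phi(v)+\psi(w)\le d_\lambda(v,w)$), replaces $\phi$ by its $d_\lambda$-concave envelope to make it $1$-Lipschitz, and then uses $|(P_*\phi)_e|\le\omega_*(e)$ together with the divergence constraint to bound any feasible flow cost below by $\sum\phi\,x+\sum\psi\,y$. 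Taking the supremum over $(\phi,\psi)$ recovers the Kantorovich minimum. Your flow-decomposition route is a legitimate and arguably more elementary alternative: it stays combinatorial, avoids invoking strong duality on the augmented graph, and as a bonus gives an explicit inverse construction $(J,z)\mapsto\pi$ that the paper's duality argument does not. The price is exactly the bookkeeping you flag---handling paths that begin, end, or transit through $v_s$, and the diagonal reconciliation $\pi(v,v)=\min(x(v),y(v))$---whereas the duality argument sidesteps this by never constructing a plan at all. Your structural identity $d_\lambda(v,w)=\min(d(v,w),2\lambda)$ is correct and sharper than the paper's parenthetical remark, and it does indeed illuminate the $\diam(G)/2$ threshold.
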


To prove \cref{lem:OPT boundary and OPT unbalanced are the same}, let us make some preliminary observations. The essence of the proof lies in the following construction, which is commonly used to prove the equivalence between \eqref{eqn:transport metric definition} and \eqref{eq:w1graph} (see \cite[\S4.2]{santambrogio2015optimal}). For every pair $v,w\in V_*$, choose a minimal path from $v$ to $w$ and denote by $E(v,w) \subset E_*$ the set of edges that appear in this path. That is, if the minimal path chosen for $v$ and $w$ is given by $v = v_0,\ldots,v_N = w$, then $E_*(v,w) = \{ (v_0,v_1),(v_1,v_2),\ldots,(v_{N-1},v_N)\}$. Then, given any $\pi \in M(V_*\times V_*)$ we define $J_{\pi}:E_* \to\mathbb{R}$ and $z_\pi:V_* \to\mathbb{R}$ as follows

\begin{align}\label{eqn:J_pi and z_pi}
  J_{\pi,e} := \sum\limits_{v\in V_*}\sum \limits_{w\in V_*} \mathbbm{1}_{E(v,w)}(e)\pi(v,w),\;\; z_\pi(v) := \pi(v,v_s) - \pi(v_s,v).
\end{align}

The proof of \cref{lem:OPT boundary and OPT unbalanced are the same} boils down to showing that if $\pi$ is a minimizer for \eqref{eqn:OPT boundary} then $(J_\pi,z_\pi)$ given by \eqref{eqn:J_pi and z_pi} is a minimizer for \eqref{eqn:Beckmann unbalanced}. We start by showing $(J_\pi,z_\pi)$ is an admissible pair.
\begin{proposition}\label{proposition:J_pi is admissible}
  Let $x,y \in \M(V)$. If $\pi \in \Pi_*(x,y)$, then 
  \begin{align*}
    (P^{\top}J_{\pi})(v) & = y(v)-x(v) + z_\pi(v) \textnormal{ for } v \in V.
  \end{align*}
  Moreover, we have, with $P_*$ denoting the incidence matrix for the graph $G_*$,
  \begin{align*}
    (P_*^{\top}J_{\pi})(v) & = y(v)-x(v) \textnormal{ for } v \in V.
  \end{align*}
\end{proposition}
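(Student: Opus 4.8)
The plan is to prove the second (full-graph) identity first, since it is insensitive to the choice of geodesics, and then to recover the first identity from it by carefully accounting for the two edges joining $v$ to the auxiliary vertex $v_s$. The engine throughout is the observation that sending a fixed amount of mass along a single path produces a flow whose divergence telescopes to a sink/source pair at the endpoints.

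Concretely, I would first show $(P_*^\top J_\pi)(v) = y(v) - x(v)$ for $v \in V$. Fix a pair $(a,b) \in V_* \times V_*$ and write its chosen minimal path as $a = u_0, u_1, \ldots, u_N = b$. By \eqref{eqn:J_pi and z_pi}, this pair deposits $\pi(a,b)$ onto $J_\pi$ along each edge of the path, so its contribution to $(P_*^\top J_\pi)(u)$ is $\pi(a,b)$ times the number of path-edges entering $u$ minus the number leaving $u$. This net count is $+1$ at $u=b$, $-1$ at $u=a$, and $0$ at every interior or off-path vertex, i.e.\ it equals $\pi(a,b)\big(\mathbbm{1}_{u=b} - \mathbbm{1}_{u=a}\big)$. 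Summing over all pairs gives $(P_*^\top J_\pi)(u) = \sum_{a \in V_*}\pi(a,u) - \sum_{b \in V_*}\pi(u,b)$, and for $u = v \in V$ the two marginal constraints defining $\Pi_*(x,y)$ collapse this to $y(v) - x(v)$, which is the second claim.

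To deduce the first identity, I would compare $P$ with $P_*$: the two matrices agree on all rows indexed by $E$, and $P_*$ merely adds rows for the sink edges. At $v \in V$ the only relevant new rows are those of $(v,v_s)$ (tail $v$, coefficient $-1$) and $(v_s,v)$ (head $v$, coefficient $+1$), while all other sink edges are disjoint from $v$ and contribute nothing. Hence
\begin{align*}
(P^\top J_\pi)(v) = (P_*^\top J_\pi)(v) + J_{\pi,(v,v_s)} - J_{\pi,(v_s,v)} = y(v) - x(v) + J_{\pi,(v,v_s)} - J_{\pi,(v_s,v)}.
\end{align*}
Comparing with the target, the proof reduces to identifying the flow on the sink edges, namely $J_{\pi,(v,v_s)} = \pi(v,v_s)$ and $J_{\pi,(v_s,v)} = \pi(v_s,v)$, which would turn the bracket into $z_\pi(v)$.

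I expect this last reduction to be the main obstacle, since \emph{a priori} the edge $(v,v_s)$ could also be traversed by the geodesic chosen for some other pair, inflating $J_{\pi,(v,v_s)}$ beyond $\pi(v,v_s)$. To control it I would record a small structural fact about geodesics in $G_*$: a minimal path visits $v_s$ at most once (a second visit creates a cycle), and a minimal path that enters $v_s$ through $(v,v_s)$ must \emph{start} at $v$, because first reaching $v$ from a different source and only then paying the weight-$\lambda$ hop into $v_s$ is never cheaper than entering $v_s$ directly from that source. Thus every geodesic through $v_s$ has the form $a \to v_s \to b$ and uses $(v,v_s)$ only when $a = v$. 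Adopting the natural convention that geodesics between two vertices of $V$ are routed inside $G$ (available whenever $d(v,w) \le 2\lambda$, and for the far pairs absorbed into direct exchanges with the sink), the edge $(v,v_s)$ is used solely by the pair $(v,v_s)$, giving $J_{\pi,(v,v_s)} = \pi(v,v_s)$, and symmetrically $J_{\pi,(v_s,v)} = \pi(v_s,v)$. Substituting into the display yields $(P^\top J_\pi)(v) = y(v) - x(v) + z_\pi(v)$. The telescoping and the incidence-matrix comparison are routine; the delicate point is purely the bookkeeping of which geodesics touch the sink edges.
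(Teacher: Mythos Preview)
Your proposal is correct and rests on the same path-telescoping mechanism as the paper; the only structural difference is the order of the two identities. The paper establishes the $P$-identity first by telescoping over pairs $(v_0,w_0)\in V\times V$ (implicitly using that their chosen geodesics lie in $G$), and then obtains the $P_*$-identity by adding back the sink-edge terms via the identification $J_{\pi,(v,v_s)}=\pi(v,v_s)$ and $J_{\pi,(v_s,v)}=\pi(v_s,v)$, which it states without justification. You reverse this: you telescope over all of $V_*\times V_*$ to get the $P_*$-identity---which, as you observe, is insensitive to the choice of geodesics---and then peel off the two sink edges at $v$ to recover the $P$-identity, invoking the same identification. Your order has the mild advantage of confining the dependence on geodesic conventions to a single step, and you are more explicit than the paper about why the identification needs a routing convention (route $V$-to-$V$ geodesics inside $G$; send $(v,v_s)$ and $(v_s,v)$ along the direct edge). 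Your treatment of ``far pairs'' with $d(v,w)>2\lambda$ is informal, but the paper leaves the same point tacit, so this is not a gap relative to the reference argument.
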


\begin{proof}
  Let $E_{\textnormal{in}}(v)$ and $E_{\textnormal{out}(v)}$ denote the sets of the incoming and outgoing edges of vertex $v$, respectively, then 
  \begin{align*}
    (P^{\top}J_{\pi})(v) = \sum \limits_{e \in E_{\textnormal{in}(v)}}J_{\pi,e} - \sum \limits_{e \in E_{\textnormal{out}(v)}}J_{\pi,e}.
  \end{align*}
  Fix $v_0,w_0 \in V$, and let $\pi_0$ be the function
  \begin{align*}
    \pi_0(v,w) =
    \left\{\begin{array}{ll}
    1 & \textnormal{ if } (v,w) = (v_0,w_0)\\ 0& \textnormal{ otherwise}.
    \end{array}
    \right.
  \end{align*}
  Then, we have 
  \begin{align*}
     \sum \limits_{e \in E_{\textnormal{out}(v_0)}}J_{\pi,e} - \sum \limits_{e \in E_{\textnormal{in}(v_0)}}J_{\pi,e} &= 1,\\
     \sum \limits_{e \in E_{\textnormal{out}(w_0)}}J_{\pi,e} - \sum \limits_{e \in E_{\textnormal{in}(w_0)}}J_{\pi,e} &= -1,\textrm{ and}\\
     \sum \limits_{e \in E_{\textnormal{out}(v)}}J_{\pi,e} - \sum \limits_{e \in E_{\textnormal{in}(v)}}J_{\pi,e} &= 0 \textnormal{ if } v \neq v_0,w_0. 
  \end{align*}
  From a linear combination of these identities for each pair $(v_0,w_0) \in V\times V$ we obtain the following formula for any $\pi \in M(V\times V)$:
  \begin{align*}
    \sum \limits_{e \in E_{\textnormal{out}(v)}}J_{\pi,e} - \sum \limits_{e \in E_{\textnormal{in}(v)}}J_{\pi,e} = \sum \limits_{w \in V_*} \pi(w,v) - \sum \limits_{v \in V_*} \pi(v,w).    
  \end{align*}
  Now, if $\pi$ is an admissible plan, we have
  \begin{align*}
    \sum \limits_{e \in E_{\textnormal{out}(v)}}J_{\pi,e} - \sum \limits_{e \in E_{\textnormal{in}(v)}}J_{\pi,e} & = \sum \limits_{w \in V} \pi(w,v) - \sum \limits_{w \in V} \pi(v,w) + \pi(v_s,v) - \pi(v,v_s)\\
    & = x(v) - y(v) - z_\pi(v).
  \end{align*}
  It follows that 
  \begin{align*}
    (P^{\top}J_{\pi})_v = y(v)-x(v) + z_\pi(v).
  \end{align*}
  This proves the first identity. For the second one, observe that 
  \begin{align*}
    (P_*^{\top}J_{\pi})_v & = \sum \limits_{e\in E_*} P_{ev} J_{\pi,v} \\
	  & = \sum \limits_{e\in E} P_{ev} J_{\pi,v}  + \sum \limits_{w \in V} P_{(w,v_s)v}J_{\pi,w} + \sum \limits_{w \in V} P_{(v_s,w)v}J_{\pi,w}\\
	  & = (P^{\top}J_{\pi})(v) - J_{\pi,(v,v_s)} + J_{\pi,(v_s,v)}.
  \end{align*}
  Using that $J_{\pi,(v,v_s)} = \pi(v,v_s)$ and $J_{\pi,(v_s,v)} = \pi(v_s,v)$ together with the formula for $(P^{\top}J_{\pi})_v$, we obtain
  \begin{align*}
    (P_*^{\top}J_{\pi})_v & = y(v)-x(v) + z_\pi(v) - \pi(v,v_s) + \pi(v_s,v)\\
	  & = y(v)-x(v),
  \end{align*}
  and the second formula is proved.
\end{proof}

With this, we are ready to prove the equivalence between the two problems.
\begin{proof}[Proof of  \cref{lem:OPT boundary and OPT unbalanced are the same}]
  Let $\pi$ be a minimizer for Problem \eqref{eqn:OPT boundary}. According to \cref{proposition:J_pi is admissible}, $(J_{\pi},z_{\pi})$ is an admissible pair for Problem \eqref{eqn:Beckmann unbalanced}. Therefore,
  \begin{align*}  
    \|J_\pi\|_1 + \lambda \|z_\pi\|_1 \geq 
    \left\{
    \begin{array}{ll}
    \min_{J \in \R^{|E|}} &\|J\|_1 + \lambda \|z\|_1 \\ \st &P^\top J = y - x + z.
    \end{array}
    \right.
  \end{align*}
  We have $J_{\pi,e} \geq 0$ for every $e$, and hence
  \begin{align*}
    \sum \limits_{e\in E_*}|J_{\pi,e}|\omega(e) = \sum \limits_{e\in E_*}J_{\pi,e}\omega(e) & = \sum \limits_{e \in E_*}\sum\limits_{v\in V_*}\sum \limits_{w\in V_*} \mathbbm{1}_{E(v,w)}(e) \omega(e)\pi(v,w)\\
	& = \sum\limits_{v\in V_*}\sum \limits_{w\in V_*} \left ( \sum \limits_{e \in E} \mathbbm{1}_{E(v,w)}(e) \omega(e)\right )\pi(v,w).
  \end{align*}
  From the definition of the sets $E(v,w)$,  for any $v,w\in V_\infty$ we have
  \begin{align*}
    \sum \limits_{e\in E_*}\mathbbm{1}_{E(v,w)}(e) \omega(e) = d_{\lambda}(v,w).
  \end{align*}
  Therefore
  \begin{align*}
    \sum \limits_{e\in E_*}|J_{\pi,e}|\omega(e)  = \sum\limits_{v\in V_*}\sum \limits_{w\in V_*}d_{\lambda}(v,w)\pi(v,w).
  \end{align*}
  On the other hand, the sum on the left can be decomposed as
  \begin{align*}
    \sum \limits_{e\in E_*}J_{\pi,e}\omega(e) = \sum \limits_{e\in E}J_{\pi,e}\omega(e) + \sum \limits_{v\in V} J_{\pi,(v,v_s)}\omega(v,v_s)+ \sum \limits_{v\in V} J_{\pi,(v_s,v)}\omega(v_s,v).
  \end{align*}
  Since $\omega(v_s,v) = \omega(v,v_s) = \lambda$ for every $v\in V$,
  \begin{align*}
    \sum \limits_{e\in E_*}J_{\pi,e}\omega(e) = \sum \limits_{e\in E}J_{\pi,e}\omega(e) + \lambda \sum \limits_{v\in V} J_{\pi,(v,v_s)} +  J_{\pi,(v_s,v)}.
  \end{align*}
  From the definition of $J_{\pi,e}$, we have 
  \begin{align*}
    J_{\pi,(v_s,v)} = \pi(v_s,v) \textnormal{ and } J_{\pi,(v,v_s)} = \pi(v,v_s) \textnormal{ for every } v \in V.
  \end{align*}
  The minimizer $\pi$ can always be modified so that for every $v$ at most one of $\pi(v,v_s)$ and $\pi(v_s,v)$ is non-zero. In this case $|z_\pi(v)| = \pi(v,v_s)+\pi(v_s,v)$ for every $v\in V$, and
  \begin{align*}
    \sum \limits_{e\in E_*}J_{\pi,e}\omega(e) = \sum \limits_{e\in E}J_{\pi,e}\omega(e) + \lambda \sum \limits_{v\in V} |z_\pi(v)|.
  \end{align*}
  This shows that 
  \begin{align*}  
    \sum \limits_{v\in V_*}\sum \limits_{w\in V_*}d_\lambda(v,w) \pi(v,w) =  \|J_\pi\|_1 + \lambda \|z_\pi\|_1,
  \end{align*}
  which shows the minimum for Problem \eqref{eqn:OPT boundary} is no smaller than the minimum for Problem \eqref{eqn:Beckmann unbalanced}. 
  
  For the reverse inequality we will implicitly use the dual problem to \eqref{eqn:OPT boundary}. Consider pairs of functions $\phi,\psi:V_* \to \mathbb{R}$ such that $\phi(v_s) = \psi(v_s) =0$ and for every $v,w \in V_*$
  \begin{align}\label{eqn:OPT boundary 1-Lipschitz}
    \phi(v)+\psi(w) \leq d_\lambda(v,w).
  \end{align}
  Following  \cite[Appendix A]{guillen2019coupling}, the dual problem to \eqref{eqn:OPT boundary} is maximizing the functional
  \begin{align*}
    \sum \limits_{v\in V}\phi(v)x(v) + \sum \limits_{w\in V}\psi(w)y(w)
  \end{align*}
  over all pairs $\phi,\psi$ described above. Let us show that the minimum of \eqref{eqn:Beckmann unbalanced} is larger than this for any $\phi,\psi$. Without loss of generality, we may assume that $\phi$ is such that
  \begin{align*}
    \phi(v) = \min \limits_{w\in V^*} d_\lambda(v,w) -\psi(w).      
  \end{align*}
  In this case it is easy to see that $|\phi(v)-\phi(v)| \leq d_{\lambda}(v,w)$ for every $v$ and $w$. In particular, if $e = (v,w)$ is an edge we have $|\phi(w)-\phi(v)| \leq d_{\lambda}(v,w) = \omega(e)$, since $(P_*\phi)_e = \phi(w)-\phi(v)$ this shows that
  \begin{align*}
     |(P_* \phi)_e| \leq \omega(e) \textnormal{ for every } e\in E_*.
  \end{align*}
  Combining these inequalities for each $e\in E_*$ and using the dual of $P_*$, we have
  \begin{align*}
    \sum \limits_{e\in E_* }\omega(e)|J_e| & \geq - \sum \limits_{e\in E_*}J_e (P_* \phi)_e = \sum \limits_{v\in V_*} (P_*^{\top}J)_v \phi(v).
  \end{align*}
  Since $\phi(v_s) = 0$ and $(P_*^{\top}J)_v = y(v)-x(v)$ when $v\neq v_s$, it follows that 
  \begin{align*}
	  \sum \limits_{e\in E_* }\omega(e)|J_e| & \geq \sum \limits_{v\in V} \phi(v) (x(v)-y(v)) \\
	    & = \sum \limits_{v\in V}\phi(v)x(v) - \sum \limits_{v\in V}\phi(v)y(v).
  \end{align*}
  On the other hand, applying \eqref{eqn:OPT boundary 1-Lipschitz} with $v=w$ yields the inequality $\psi(v) \leq -\phi(v)$ for every $v$, from where it follows that 
  \begin{align*} 
	  \sum \limits_{v\in V} \phi(v)x(v) - \sum \limits_{v \in V}\phi(v)y(v)  \geq \sum \limits_{v\in V} \phi(v)x(v) + \sum \limits_{v \in V}\psi(v)y(v).	 
  \end{align*}
  In conclusion, for every admissible pair $\phi$ and $\psi$ we have the inequality 
  \begin{align*}
    \sum \limits_{e\in E_*}|J_e|\omega(e) & \geq \sum \limits_{v\in V} \phi(v)x(v) + \sum \limits_{v \in V}\psi(v)y(v).
  \end{align*}  
  Taking the supremum over all admissible $\phi$ and $\psi$ we have, by duality, 
  \begin{align*}
    \sum \limits_{e\in E_*}|J_e|\omega(e) & \geq \inf \limits_{\pi \in \Pi_*} \sum\limits_{v\in V_*}\sum\limits_{w\in V_*}d_\lambda(v,w)\pi(v,w),
  \end{align*}  
  and this finishes the proof.

\end{proof}

\section{Bounds}\label{section:bounds}

We can relate our distance on partitions to other constructions in the literature: the \textit{Hamming distance} and the \textit{total variation distance}.

In information theory, the Hamming distance between two binary strings of equal length is the number of positions in which they differ. Inspired by this definition, the authors in \cite{Mattingly2018} compute a notion of Hamming distance between two graph partitions $X$ and $Y$.  Using $v\in x_i$ to indicate that vertex $v$ belongs to component $i$ of partition $X$, the Hamming distance is defined as
\begin{equation}
    \mathrm{dist}_\mathrm{HAM} (X,Y) = \left\{
    \begin{array}{rl}
    \min\limits_{S\in \R^{k\times k}}& \sum\limits_{v\in V} \sum\limits_{ij} S_{ij} \mathbbm{1}\left[ v\in x_i \land v\notin y_j     \right]\\
    \st & S\in \DS_k.
    \end{array}
    \right.
\end{equation}

That is, for a given matching of the components, we count the number of vertices whose label differs and take the minimum over all matchings. 
Generalizing to non-binary functions for the weights on vertices, we can formulate a distance as the sum of the vertexwise differences in weights over the matched components.  First, we can write the $L_1$ or \textit{total variation} distance between two components $x_i$ and $y_j$ as 

\begin{equation}
    \ell_{1}(x_i,y_j) = \frac{1}{2}\sum\limits_{v\in V} | x_i(v) - y_j(v)  |
\end{equation}
and we can lift this to a distance between partitions by solving the assignment problem using $\ell_1$ as the cost function.  We write

\begin{equation}
\label{eqn:l1_lift}
    {L_{1}} (X,Y) = \left\{
    \begin{array}{rl}
    \min\limits_{S\in \R^{k\times k}} &\sum\limits_{v\in V} \sum\limits_{ij}S_{ij} \ell_1(x_i,y_j)\\
    \st& S\in \DS_k,
    \end{array}
    \right.
\end{equation}
These distances are of the same form as \eqref{eqn:A definition 1} but with different cost functions $C$.

We can show that, for any choice of weight on the vertices, the $L_1$ distance lower-bounds the transport distance between partitions:
\begin{proposition}
For two balanced partitions $X$ and $Y$, we have ${L_1}(X,Y) \leq A(X,Y).$
\end{proposition}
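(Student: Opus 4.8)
The plan is to prove the proposition at the level of individual components first, establishing the pointwise inequality $\ell_1(x_i, y_j) \leq W_1(x_i, y_j)$ for any pair of components, and then lift this comparison through the shared assignment structure. The key observation is that both $L_1(X,Y)$ and $A(X,Y)$ are instances of the same lifted construction \eqref{eqn:A definition 1}, differing only in their ground cost $C$ (namely $\ell_1$ versus $W_1$). So if I can show that $\ell_1$ is pointwise dominated by $W_1$ on components, the inequality between the lifted distances should follow almost immediately.

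First I would prove the component-level bound $\ell_1(x_i, y_j) \leq W_1(x_i, y_j)$. The cleanest route uses the flow formulation \eqref{eq:w1graph}: let $J$ be an optimal transport plan achieving $W_1(x_i, y_j)$, so that $P^\top J = y_j - x_i$. For each vertex $v$, the net flow $(P^\top J)(v) = y_j(v) - x_i(v)$, and this net flow is a signed combination of the edge flows $J_e$ incident to $v$. I would bound $|y_j(v) - x_i(v)| = |(P^\top J)(v)| \leq \sum_{e \ni v} |J_e|$ by the triangle inequality. Summing over all $v\in V$, each edge $e$ contributes to exactly two vertices, so $\sum_{v\in V} |y_j(v) - x_i(v)| \leq 2\sum_{e\in E} |J_e| = 2\|J\|_1 = 2 W_1(x_i, y_j)$. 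Dividing by $2$ gives exactly $\ell_1(x_i, y_j) \leq W_1(x_i, y_j)$, since $\ell_1$ carries the factor $\tfrac{1}{2}$.

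With the pointwise bound in hand, the lift is routine. Let $S^*$ be an optimal doubly stochastic matrix for $A(X,Y)$, so that $A(X,Y) = \sum_{ij} S^*_{ij} W_1(x_i, y_j)$. Since $S^* \in \DS_k$ is nonnegative, applying the component-level inequality termwise gives
\begin{align*}
A(X,Y) = \sum_{ij} S^*_{ij} W_1(x_i, y_j) \geq \sum_{ij} S^*_{ij}\, \ell_1(x_i, y_j) \geq L_1(X,Y),
\end{align*}
where the final inequality holds because $S^*$ is feasible for the minimization defining $L_1(X,Y)$ in \eqref{eqn:l1_lift}, and $L_1$ takes the minimum over all such $S$. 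This yields $L_1(X,Y) \leq A(X,Y)$, as claimed.

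The only genuinely substantive step is the component-level inequality, and even there the main thing to get right is the bookkeeping of the incidence-matrix sum: one must verify that summing $|(P^\top J)(v)|$ over vertices double-counts each edge (accounting for the factor of $2$ that the $\tfrac12$ in $\ell_1$ absorbs) and that the orientation-induced signs in $P$ do not obstruct the triangle-inequality step. I would also note that balancedness is what guarantees $W_1(x_i, y_j)$ is finite (equal total mass), so the comparison is between finite quantities; no subtlety arises for the unbalanced case here since the statement is restricted to balanced partitions. I expect no real obstacle beyond this careful accounting.
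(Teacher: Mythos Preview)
Your proposal is correct and proves the same component-level inequality $\ell_1(x_i,y_j)\le W_1(x_i,y_j)$ as the paper, followed by the identical lifting step through the shared assignment structure. The difference lies entirely in how the component-level bound is obtained. The paper works in the Kantorovich (coupling) formulation \eqref{eqn:transport metric definition}: it uses that $d(v,w)\ge 1$ whenever $v\neq w$ to bound $W_1$ below by the off-diagonal mass of an optimal coupling $\pi^*$, and then argues that $\pi^*(v,v)=\min(x_i(v),y_j(v))$ so that this off-diagonal mass equals $\tfrac12\sum_v|x_i(v)-y_j(v)|$. Your argument instead stays in the Beckmann (flow) formulation \eqref{eq:w1graph}: you bound $|y_j(v)-x_i(v)|=|(P^\top J)(v)|$ by the incident edge flow and use that each edge touches two vertices. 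Your route is slightly more elementary in that it avoids having to justify the diagonal structure of an optimal coupling (the paper's ``$\pi^*$ moves as little mass as possible'' step, which really means one may choose an optimizer maximizing the diagonal). On the other hand, the paper's argument makes transparent \emph{why} the factor $\tfrac12$ appears---it is exactly the non-overlapping mass---whereas in your version the $2$ arises from edge-vertex incidence counting, which is correct but less directly interpretable. Both arguments rely on the graph being unweighted (the paper via $d(v,w)\ge 1$, yours via $W_1=\|J\|_1$), consistent with the setting of \cref{subsec:balanced_comp_distances}.
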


\begin{proof}
Consider a pair of components $x_i$ and $y_j$. From \eqref{eqn:transport metric definition}, we know that 
\begin{equation*} W_1(x_i, y_j) = \min_{\pi \in \Pi(x_i, y_j)} \sum_{v, w \in V} d(v, w) \pi(v, w). \end{equation*}
Because $d(v, w)$ is the shortest path distance between $v$ and $w$, $d(v, w) = 0$ if and only if $v = w$, and $d(v, w) \geq 1$ otherwise. Therefore, for any $\pi \in \Pi(x_i, y_j)$, 
\begin{equation*}  \sum_{v, w \in V} d(v, w) \pi(v, w) =  \sum_{v \neq w} d(v, w) \pi(v, w) \geq \sum_{v \neq w} \pi(v, w). \end{equation*}
Let $\pi^*$ be the element of $\Pi(x_i, y_j)$ that minimizes $W_1(x_i, y_j)$. Then,

\begin{align*} \sum_{v \neq w} \pi^*(v, w) &= \sum_{v, w \in V} \pi^*(v, w) - \sum_{v \in V} \pi^*(v, v)\\
&= \sum_{v \in V} \left( x_i(v) - \pi^*(v, v) \right). \end{align*}

Because $\pi^*$ moves as little mass as possible, $\pi^*(v, v) = \min(x_i(v), y_j(v))$. Then, 
\begin{equation*} x_i(v) - \pi^*(v, v) = \begin{cases} 0 &\textrm{ if } x_i(v) \leq y_j(v) \\ x_i(v) - y_j(v) & \textrm{ otherwise,}
\end{cases} \end{equation*}
and hence
\begin{equation*} \sum_{v \neq w} \pi^*(v, w) = \sum_{\substack{v \in V \\ x_i(v) \geq y_j(v)}} x_i(v) - y_j(v) = \frac{1}{2} \sum_{v \in V} |x_i(v) - y_j(v)|. \end{equation*}
Therefore, we have shown that 
\begin{equation*} \min_{\pi \in \Pi(x_i, y_j)} \sum_{v, w \in V} d(v, w) \pi(v, w) \geq \frac{1}{2} \sum_{v \in V} |x_i(v) - y_j(v)|, \end{equation*} so $W_1(x_i, y_j) \geq \ell_1(x_i, y_j)$. 

Now, let $S$ denote the optimal matching for the formulation of $A(X, Y)$ given in $\eqref{eqn:A definition 1}$. Then, 
\begin{equation*} A(X, Y) = \sum_{ij} S_{ij}W_1(x_i, y_j) \geq \sum_{ij}S_{ij} \ell_1(x_i, y_j) \geq L_1(X, Y), \end{equation*} 
as desired. \end{proof}

Qualitatively, we expect some differences between the $L_1$ and transport distances.  First, the $L_1$ distance does not see the structure of the graph, since it is computed from only the overlapping portions of matched components in the two partitions.  For this reason, two qualitatively similar components with little overlap are as far apart in $L_1$ distance as two components on opposite sides of the graph, whereas the transport distance recognizes that the former are closer together than the latter. This can happen in practice if we take one partition with thin components (i.e., nearly every vertex is on the boundary of a component) and construct a new partition by slightly perturbing the first. We expect little overlap between these two partitions, but this small perturbation can be corrected by moving mass a short distance, which gives rise to a high $L_1$ distance but a low transport distance. 
We give a concrete example in \Cref{sec:grid_ham_ot}, where two partitions of the grid graph with ``snakey'' components are far apart in $L_1$ but not in transport distance because the first partition roughly looks like a small perturbation of the second.

\section{Experiments and Empirical Evaluation}\label{sec:experiments}

In this section, we provide a suite of empirical applications of our metric using both synthetic data from grid partitions and real geographic data including election results.

\subsection{Implementation and Experimental Setup}

We run all of the following experiments using the Python programming language on consumer-grade hardware, and our code is available on GitHub.\footnote{\url{https://github.com/vrdi/geometry-of-graph-partitions}}  We use the packages CVXPY \cite{cvxpy} and \texttt{scikit-learn} \cite{scikit-learn} to perform the optimization and embedding. The grid graph partitions in \cref{sec:grid_parts} and the Markov chain sampling in
\cref{sec:arkansas,sec:iowa,sec:nc_ensemble} rely on the \texttt{enumerator}
\cite{schutzman2019enumerator} and \texttt{GerryChain} \cite{gerrychain} open source software packages 
available on GitHub.  The spatial and electoral data comes from the Metric Geometry and Gerrymandering Group's \texttt{mggg-states}
repository \cite{mggg_states}, the NHGIS database \cite{manson2017ipums}, and the data
accompanying \cite{Mattingly2018} from the Quantifying Gerrymandering group \cite{nc_data}. 

Many of our examples embed multiple partitions of a fixed graph onto the plane to visualize an ensemble, using the method outlined below. 
We store the pairwise distances between the $n$ partitions in a matrix $D$, where $D_{ij}$ is the distance between partitions $i$ and $j$.  We then would like to find a set of points $P_1,\ldots,P_n\in\R^2$ such that the Euclidean distance between $P_i$ and $P_j$ is equal to $D_{ij}$.  Doing so with zero distortion may be impossible or may require using a high-dimensional ambient space that is impossible to visualize meaningfully.  To resolve this issue, we use \textit{multidimensional scaling} (MDS), which computes $P_1,\ldots,P_n$ 
in a way that (approximately) minimizes the sum of the squared residuals $\left(D_{ij} - \mathrm{dist}(P_i,P_j) \right)^2$.  For a modern treatment of MDS including algorithms and applications, see \cite{borg2012applied}.

\subsection{Grid Partitions}\label{sec:grid_parts}

We begin by examining the distance between partitions of a \textit{grid graph}.  While the number of feasible districting plans for a US state is unfathomably large, for small grids, the number of partitions is much more manageable; for example, there are only 117 ways to partition a $4\times4$ grid into four connected components with four vertices in each component.  We can therefore compute the transport distance between all or a large portion of the possible partitions and embed them in the plane, gaining an intuitive understanding of the qualitative similarities of nearby partitions. 

\begin{figure} 
\centering 
\begin{tikzpicture} 
\begin{axis}[width=.9\textwidth, axis equal] 

\pgfplotsinvokeforeach{0,...,9}{ 
            \addplot[ 
            text mark={\includegraphics[width=22pt]{experiments/grid/3-by-3-imgs/#1.png}}, 
            mark=text, 
            only marks, 
            ] table[ 
            x = x, 
            y=y, 
            col sep=comma,] {experiments/grid/3-by-3-csvs/3_3-3__#1.csv}; 
            };
\end{axis}
\end{tikzpicture}
\caption{The ten partitions of the $3\times3$ grid into three equal-sized components.}\label{fig:3x3grid}
\end{figure}

In \cref{fig:3x3grid}, we show the MDS embedding of the pairwise transport distances between the ten partitions of a $3\times3$ grid into three connected components of size three (i.e., the \textit{triomino tilings} of the $3\times3$ grid).  This visualization reveals several features of the metric space.  The two partitions into horizontal and vertical `stripes' are the furthest apart, and the remaining eight partitions cluster in pairs based on which straight triomino is included in the partition.  For example, the two partitions near the middle-top both have a straight triomino along the top row and two `L'-triominos covering the lower two rows.  Furthermore, the partitions that include a horizontal straight triomino fall in the top-left half and the ones with a vertical straight triomino fall in the lower-right half.  The two partitions closest to the top-right corner share an 'L'-triomino, as does the pair in the lower-left.

\begin{figure} 
\centering 
\begin{tikzpicture} 
\begin{axis}[width=.9\textwidth, axis equal] 

\pgfplotsinvokeforeach{0,...,116}{ 
            \addplot[ 
            text mark={\includegraphics[width=12pt]{experiments/grid/4-by-4-imgs/#1.png}}, 
            mark=text, 
            only marks, 
            ] table[ 
            x = x, 
            y=y, 
            col sep=comma,] {experiments/grid/4-by-4-csvs/4_4-4__#1.csv}; 
            };

\end{axis}
\end{tikzpicture}
\caption{The 117 partitions of the $4\times4$ grid into four equal-sized components.}\label{fig:4by4}
\end{figure}

Similar phenomena can be observed in the embedding of the 117 partitions of the $4\times4$ grid into four connected components of size four (i.e. the \textit{tetromino tilings}) in \cref{fig:4by4}. Again, the two `striped' partitions are the furthest apart, and there are visible clusters of partitions with similar compositions.  Additionally, there are two partitions each composed of four `T'-tetrominos and two partitions composed of four `L'-tetrominos which are at the center of this metric space, and these appear in the middle of the image.  \Cref{fig:4by4-reduced} highlights this structure.

\begin{figure} 
\centering 
\begin{tikzpicture} 
\begin{axis}[width=.9\textwidth, axis equal] 

\pgfplotsinvokeforeach{0,...,19}{ .
            \addplot[
            text mark={\includegraphics[width=12pt]{experiments/grid/4-by-4-imgs/#1.png}}, 
            mark=text, 
            only marks,
            opacity=.3,
            ] table[     
            x = x, 
            y=y, 
            col sep=comma,] {experiments/grid/4-by-4-csvs/4_4-4__#1.csv}; 
            };

\pgfplotsinvokeforeach{21,...,41}{ 
            \addplot[ 
            text mark={\includegraphics[width=12pt]{experiments/grid/4-by-4-imgs/#1.png}}, 
            mark=text,
            only marks,
            opacity=.3, 
            ] table[          
            x = x, 
            y=y,
            col sep=comma,] {experiments/grid/4-by-4-csvs/4_4-4__#1.csv}; 
            };

\pgfplotsinvokeforeach{43,...,71}{ 
            \addplot[ 
            text mark={\includegraphics[width=12pt]{experiments/grid/4-by-4-imgs/#1.png}}, 
            mark=text,
            only marks,
            opacity=.3,
            ] table[          
            x = x, 
            y=y,
            col sep=comma,] {experiments/grid/4-by-4-csvs/4_4-4__#1.csv}; 
            };

\pgfplotsinvokeforeach{73,...,81}{ 
            \addplot[ 
            text mark={\includegraphics[width=12pt]{experiments/grid/4-by-4-imgs/#1.png}}, 
            mark=text,
            only marks,
            opacity=.3,
            ] table[          
            x = x, 
            y=y,
            col sep=comma,] {experiments/grid/4-by-4-csvs/4_4-4__#1.csv}; 
            };            
            
\pgfplotsinvokeforeach{83,...,95}{ 
            \addplot[ 
            text mark={\includegraphics[width=12pt]{experiments/grid/4-by-4-imgs/#1.png}}, 
            mark=text,
            only marks,
            opacity=.3,
            ] table[          
            x = x, 
            y=y,
            col sep=comma,] {experiments/grid/4-by-4-csvs/4_4-4__#1.csv}; 
            };
            
\pgfplotsinvokeforeach{97,...,110}{ 
            \addplot[ 
            text mark={\includegraphics[width=12pt]{experiments/grid/4-by-4-imgs/#1.png}}, 
            mark=text,
            only marks,
            opacity=.3,
            ] table[          
            x = x, 
            y=y,
            col sep=comma,] {experiments/grid/4-by-4-csvs/4_4-4__#1.csv}; 
            };           
            
\pgfplotsinvokeforeach{112,...,116}{ 
            \addplot[ 
            text mark={\includegraphics[width=12pt]{experiments/grid/4-by-4-imgs/#1.png}}, 
            mark=text,
            only marks,
            opacity=.3,
            ] table[          
            x = x, 
            y=y,
            col sep=comma,] {experiments/grid/4-by-4-csvs/4_4-4__#1.csv}; 
            };           
            
\pgfplotsinvokeforeach{20,42,72,82,96,111}{ 
            \addplot[ 
            text mark={\includegraphics[width=20pt]{experiments/grid/4-by-4-imgs/#1.png}}, 
            mark=text,
            only marks,
            ] table[          
            x = x, 
            y=y,
            col sep=comma,] {experiments/grid/4-by-4-csvs/4_4-4__#1.csv}; 
            };                      

\end{axis}
\end{tikzpicture}
\caption{The four central and two peripheral partitions of the $4\times4$ grid.}\label{fig:4by4-reduced}
\end{figure}

As a illustration of the balanced case, in \cref{fig:6by6-sample} we sample from the partitions of the $6\times6$ grid into three equal-sized components.  There are 264,500 such partitions and hence computing the pairwise distances and embedding all of them is not practical or informative.  Rather, we examine a random sample of 100 partitions, plus the two `striped' partitions.  The familiar structure emerges: the `striped' partitions are the furthest apart and visually similar partitions appear near one another in the embedding.

\begin{figure}
\centering 
\begin{tikzpicture}[scale=.45] 
\begin{axis}[width=\textwidth, axis equal] 

\pgfplotsinvokeforeach{0,...,101}{ 
            \addplot[ 
            text mark={\includegraphics[width=14pt]{experiments/grid/6-by-6-imgs/sample1/#1.png}}, 
            mark=text,
            only marks, 
            ] table[           
            x = x, 
            y=y, 
            col sep=comma,] {experiments/grid/6-by-6-csvs/sample1/6_6-3__#1.csv}; 
            };

\end{axis}
\end{tikzpicture}
\qquad
\begin{tikzpicture}[scale=.45]
\begin{axis}[width=\textwidth, axis equal] 
\pgfplotsinvokeforeach{0,...,101}{ 
            \addplot[ 
            text mark={\includegraphics[width=14pt]{experiments/grid/6-by-6-imgs/sample7/#1.png}}, 
            mark=text,
            only marks, 
            ] table[           
            x = x, 
            y=y, 
            col sep=comma,] {experiments/grid/6-by-6-csvs/sample7/6_6-3__#1.csv}; 
            };

\end{axis}
\end{tikzpicture}
\caption{Two samples of 102 partitions of the $6\times6$ grid into three components.}\label{fig:6by6-sample}
\end{figure}

\begin{figure}
\centering
\begin{tikzpicture}
\begin{axis}[width=.9\textwidth, axis equal] 
\pgfplotsinvokeforeach{0,...,169}{ 
            \addplot[ 
            text mark={\includegraphics[width=10pt]{experiments/grid/3-by-3-pm2-imgs/#1.png}}, 
            mark=text,
            only marks, 
            ] table[           
            x = x, 
            y=y, 
            col sep=comma,] {experiments/grid/3-by-3-pm2-csvs/pen_05/#1.csv}; 
            };
\end{axis}
\end{tikzpicture}
\caption{The partitions of the $3\times 3$ grid into three components of size $3\pm 2$ with a penalty of $\lambda=.5$}\label{fig:3by3-pm2-05}
\end{figure}

\begin{figure}
\centering
\begin{tikzpicture}
\begin{axis}[width=.9\textwidth, axis equal] 
\pgfplotsinvokeforeach{0,...,169}{ 
            \addplot[ 
            text mark={\includegraphics[width=10pt]{experiments/grid/3-by-3-pm2-imgs/#1.png}}, 
            mark=text,
            only marks, 
            ] table[           
            x = x, 
            y=y, 
            col sep=comma,] {experiments/grid/3-by-3-pm2-csvs/pen_2/#1.csv}; 
            };
\end{axis}
\end{tikzpicture}
\caption{The partitions of the $3\times 3$ grid into three components of size $3\pm 2$ with a penalty of $\lambda=2$}\label{fig:3by3-pm2-2}
\end{figure}

\begin{figure}
\centering
\begin{tikzpicture}
\begin{axis}[width=.9\textwidth, axis equal] 
\pgfplotsinvokeforeach{0,...,169}{ 
            \addplot[ 
            text mark={\includegraphics[width=10pt]{experiments/grid/3-by-3-pm2-imgs/#1.png}}, 
            mark=text,
            only marks, 
            ] table[           
            x = x, 
            y=y, 
            col sep=comma,] {experiments/grid/3-by-3-pm2-csvs/pen_5/#1.csv}; 
            };
\end{axis}
\end{tikzpicture}
\caption{The partitions of the $3\times 3$ grid into three components of size $3\pm 2$ with a penalty of $\lambda=5$}\label{fig:3by3-pm2-5}
\end{figure}

To examine the \textit{unbalanced} problem, we look at partitions of the $3\times3$ grid into three components of size between one and five, inclusive.  There are 170 such partitions, including those in \cref{fig:3x3grid}.  We use the distance in \cref{prop:unbalanced_simplified_lp} using the 1-norm on $z$ in the objective.  We vary $\lambda$  to illustrate properties of the unbalanced cost function for different parameter regimes:
\begin{itemize}
\item First, we take $\lambda=0.5$, so the cost of sending one unit of flow through an edge is the same as leaving that unit as unbalanced mass in the variable $z$.
This formulation is similar to both the Hamming and total variation distance (see \cref{section:bounds}), where the cost to match component $x_i$ in the first partition to component $y_j$ in the second is the number of vertices in the set difference $x_i\setminus y_j$.  The resulting embedding in \cref{fig:3by3-pm2-05} confirms this
set difference observation: Nearby partitions tend to have a component in common and the other two component are similar to each other, often differing in the assignment of only one vertex.
\item Next, we consider $\lambda=2$, where  the cost of leaving one unit in $z$ equals the graph diameter. As in \cref{prop:large_lambda}, for any pairing of the partition components, it will be suboptimal to move mass through $z$ unless absolutely necessary, in the case that two components of differing mass are matched.  
\item The $\lambda=2$ case contrasts significantly with $\lambda=5$, shown in \cref{fig:3by3-pm2-5}. Here, the cost of having mass in $z$
is so high that the distance between any pair of unbalanced partitions should be larger than the distance between any pair of balanced partitions.  In the embedding, we therefore see the partitions for which there exists a matching of components of equal mass cluster together.  Furthermore, we see the familiar structure emerge in each cluster separately.  For example, the ten partitions consisting of three components of size three appear together at the top of the \cref{fig:3by3-pm2-5} in an arrangement similar to the one in \cref{fig:3x3grid}.
\end{itemize}

\subsection{Hamming Distance vs.\ Transport Distance on a Grid}
\label{sec:grid_ham_ot}

Because the Hamming distance depends only on the amount of overlap between two districts in a partition, it is insensitive to the distances between vertices in the graph. To illustrate the difference between the Hamming and transport distances, we run two Markov chains of partitions of a $60\times60$ grid into six components. 

These Markov chains, at each time step, relabel a random vertex in the graph. If the resultant partition consists of connected components that all have nearly the same number of vertices, this partition is accepted and we propose another vertex to relabel. Otherwise, the step is rejected and we retry.  As shown in \cite{najt2019complexity}, this procedure results in partitions with geometrically irregular and highly non-compact components; as an example, the start and end positions of each chain are shown in \cref{fig:startandendongrids}. 

We let both Markov chains run and compute the distance between them at every 1,000th step, using Hamming and transport distance; \cref{fig:tipdistances} shows the result. Over time, the Hamming distance increases while the transport distance decreases. The MDS plots in \cref{fig:mdsflipwalks} show that in this situation, the transport distance and the Hamming distance distinguish qualitatively different properties.  The Hamming distance highlights that the two initial partitions have a significant amount of overlapping vertices and the final partitions do not.  As a point of contrast, the transport distance detects that transforming the initial partition into the other requires moving a significant amount of mass a large distance through the graph, while much less work is needed to match two final plans since their boundaries are interleaved. Put differently, partitions whose boundaries are long and intertwined are considered similar under the transport metric because mass does not have to be displaced a long distance to convert one into the other, while Hamming distances simply count overlapping vertices.

\begin{figure}
\centering
\begin{subfigure}{0.24\textwidth}
\centering
    \includegraphics[width=\textwidth]{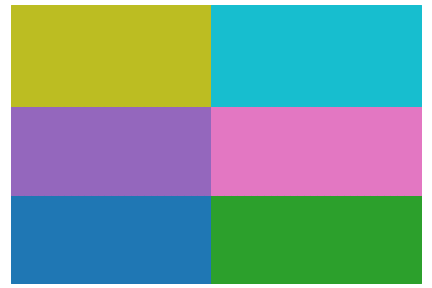}
    \subcaption{Chain 1 start}
\end{subfigure}
\begin{subfigure}{0.24\textwidth}
\centering
    \includegraphics[width=\textwidth]{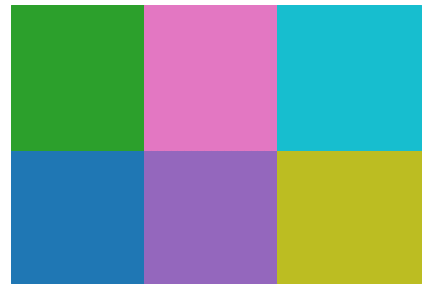}
    \subcaption{Chain 2 start}
\end{subfigure}
\begin{subfigure}{0.24\textwidth}
\centering
    \includegraphics[width=\textwidth]{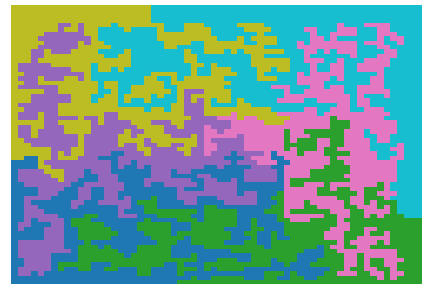}
    \subcaption{Chain 1 end}
\end{subfigure}
\begin{subfigure}{0.24\textwidth}
\centering
    \includegraphics[width=\textwidth]{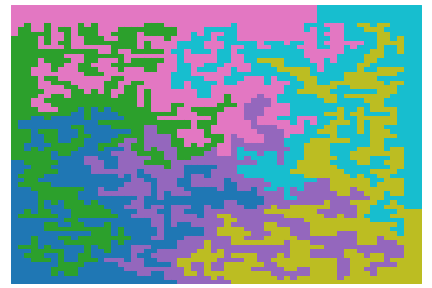}
    \subcaption{Chain 2 end}
\end{subfigure}

\caption{Two Markov chains on a 60-by-60 grid}
\label{fig:startandendongrids}
\end{figure}

\begin{figure}
\begin{subfigure}{0.5\textwidth}
\centering
    \resizebox{\columnwidth}{!}{%
   \begin{tikzpicture}
	\begin{axis}[
    xlabel={Step number},
    ylabel={Distance apart}]
	\addplot table[x=step, y=d, col sep=comma, mark=none]{experiments/flipwalks/tipsH.csv};
	\end{axis}
	\end{tikzpicture}
	}
    \caption{Hamming distance}
\end{subfigure}
\begin{subfigure}{0.47\textwidth}
\centering
    \resizebox{\columnwidth}{!}{%
    \begin{tikzpicture}
	\begin{axis}[
    xlabel={Step number},
    ]
    \addplot table[x=step, y=d, col sep=comma, mark=none]{experiments/flipwalks/tipsT.csv};
	\end{axis}    
	\end{tikzpicture}
	}
	\caption{Transport distance}
\end{subfigure}

\caption{Distance between Chain 1 and Chain 2}
\label{fig:tipdistances}
\end{figure}
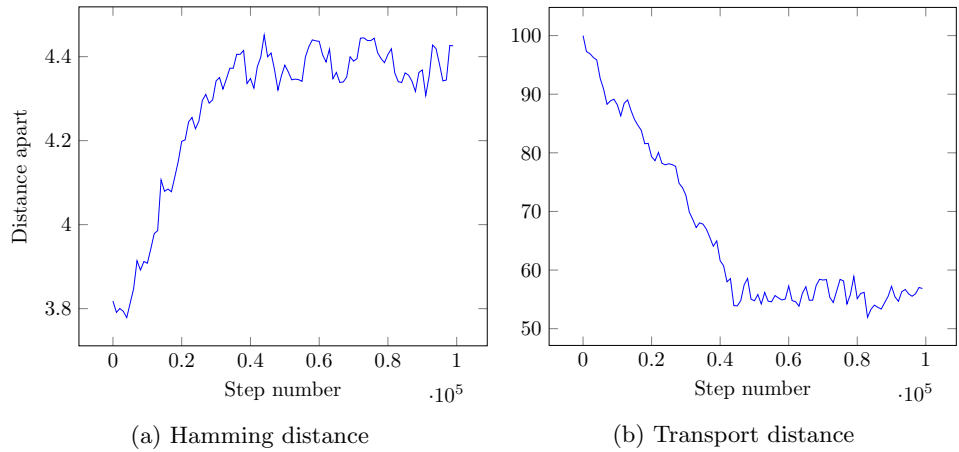

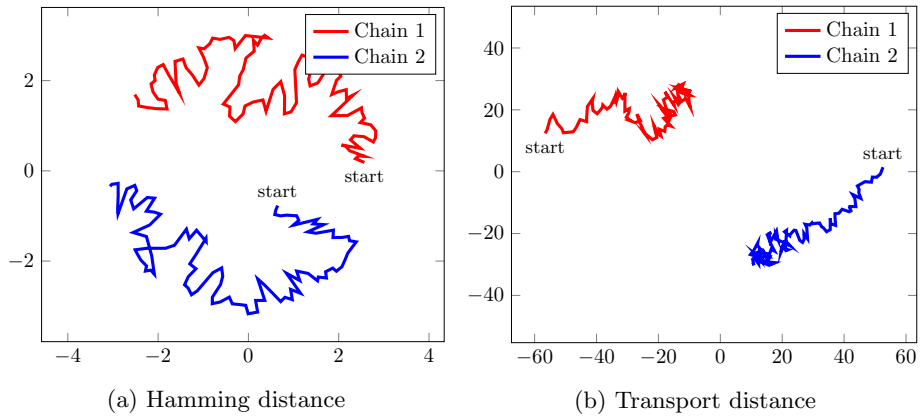
\begin{figure}
\centering
\begin{subfigure}{0.46\textwidth}
\centering
    \resizebox{\columnwidth}{!}{%
   \begin{tikzpicture}
	\begin{axis}[scatter/classes={a={red},b={blue}}, axis equal]
	\addplot[red, mark=none, line width=1.5pt] table[x=X, y=Y, col sep=comma]{experiments/flipwalks/flipwalkHXY1.csv}
	node[pos=0, below, text=black]{\small start};
	\addplot[blue, mark=none, line width=1.5pt] table[x=X, y=Y, col sep=comma]{experiments/flipwalks/flipwalkHXY2.csv}
	node[pos=0, above, text=black]{\small start};
	\legend{Chain 1,Chain 2}
	\end{axis}
	\end{tikzpicture}
	}
    \caption{Hamming distance}
\end{subfigure}
\begin{subfigure}{0.48\textwidth}
\centering
    \resizebox{\columnwidth}{!}{%
    \begin{tikzpicture}
	\begin{axis}[axis equal]
	\addplot[red, mark=none, line width=1.5pt] table[x=X, y=Y, col sep=comma]{experiments/flipwalks/flipwalkTXY1.csv}
	node[pos=0, below, text=black]{\small start};
	\addplot[blue, mark=none, line width=1.5pt] table[x=X, y=Y, col sep=comma]{experiments/flipwalks/flipwalkTXY2.csv}
	node[pos=0, above, text=black]{\small start};
	\legend{Chain 1,Chain 2}
	\end{axis}
	\end{tikzpicture}
	}
	\caption{Transport distance}
\end{subfigure}

\caption{MDS embeddings of Chain 1 and Chain 2}
\label{fig:mdsflipwalks}
\end{figure}

\subsection{Simulated annealing on Arkansas}\label{sec:arkansas}
A more sophisticated technique for generating partitions meeting a specific criterion uses simulated annealing, where a weighting function is tuned over time to first allow rapid exploration of the space of possibilities and later to settle into a local optimum. 
To show how our distance can be used to analyze the behavior of such a process, we run a Markov chain to partition Arkansas into four congressional districts, focusing on producing compact plans. We take 500,000 steps along the Markov chain using the random relabelling proposal as in \cref{sec:grid_ham_ot}.  To perform the annealing, the first 100,000 are taken without weighting, for steps 100,000 to 500,000 we accept a proposed step with probability proportional to $\exp(\beta |\partial P|)$, where $\partial P$ denotes the number of edges in the graph which join two vertices in different components, which represents a discretization of the \textit{boundary length} of the districts in the plan. The parameter $\beta$ ranges linearly from $\beta = 0$ (no weighting) to $\beta = 3$ from step 100,000 to step 400,000, and is kept fixed at $3$ for the final 100,000 steps.

\begin{figure}
\centering
\begin{tabular}{@{}c@{}c@{}c@{}c@{}}
\includegraphics[width=.25\textwidth]{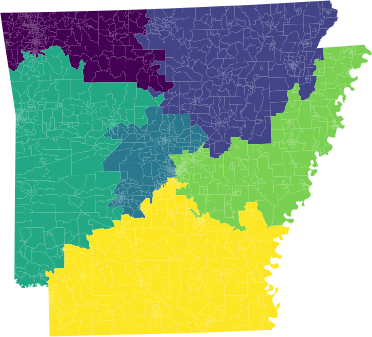}&
\includegraphics[width=.25\textwidth]{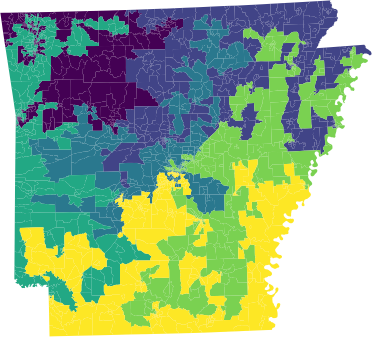}&
\includegraphics[width=.25\textwidth]{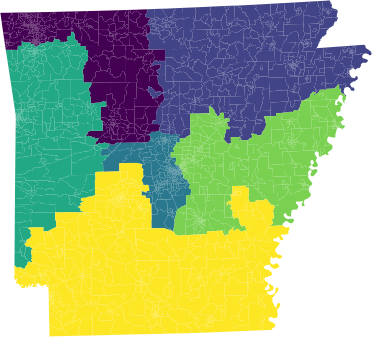}&
\includegraphics[width=.25\textwidth]{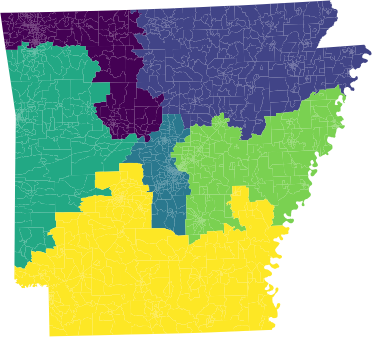}\\
Initial & 100,000 steps & 400,000 steps & Final
\end{tabular}

    \caption{Snapshots of a simulated annealing Markov chain on Arkansas districts.}
\label{fig:arkansas-annealing}
\end{figure}

An embedding of every 10,000th plan in the chain is shown in \cref{fig:annealXY}. The plot provides an effective visualization showing that when the chain is unrestrained by the Metropolis weighting, it moves more quickly through the state space. Also, the chain initially moves away from the starting plan before being brought closer to the initial plan once the Metropolis weighting is introduced. This observation can be confirmed in the snapshots in \cref{fig:arkansas-annealing}.

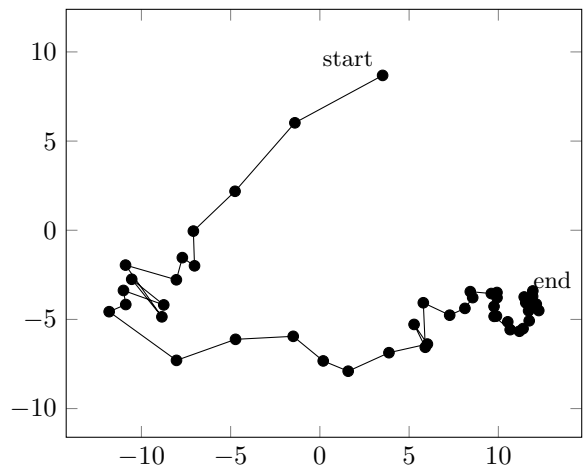
\begin{figure}
\centering
\begin{tikzpicture}[scale=1.]
\begin{axis}[axis equal]

\addplot[color=black, mark=*] table[x=X, y=Y, col sep=comma]{experiments/annealing/annealXY.csv}
node[pos=0, above left, text=black]{\small start}
node[pos=1, above right, text=black]{\small end};
\end{axis}
\end{tikzpicture}
\caption{A Markov chain-generated walk in the space of partitions with simulated annealing.}
\label{fig:annealXY}
\end{figure}

\subsection{Partisan clustering}\label{sec:iowa}
Given a large ensemble of districting plans, we can investigate the geographic features of those plans with extreme partisan statistics. To illustrate how our distance can be used for such an analysis, we consider an ensemble of congressional districting plans for Iowa, generated by a `recombination' Markov chain \cite{deford2019recom}. Here, rather than choosing a single random vertex to relabel, we instead randomly choose two components of the partition to merge and re-split into two new components.  

\Cref{fig:iowa08,fig:iowa12} show embeddings of these plans, colored by the number of seats won under two different historical election results: the 2008 Presidential election, in which the Democratic candidate Barack Obama won approximately 55 percent of the two-way vote share against Republican John McCain, and the 2012 Presidential election, in which Barack Obama won approximately 53 percent of the two-way vote share against Republican Mitt Romney. In each case, most plans produce three Democratic seats but a small number do not, and these tend to cluster near one another, with a stronger pattern for the 2012 voting data. \Cref{fig:iowamaps} shows the maps of each of the plans which, under the 2012 data, had two majority Democratic districts, which shows the geographic similarity which resulted in the clustering in the embedding.

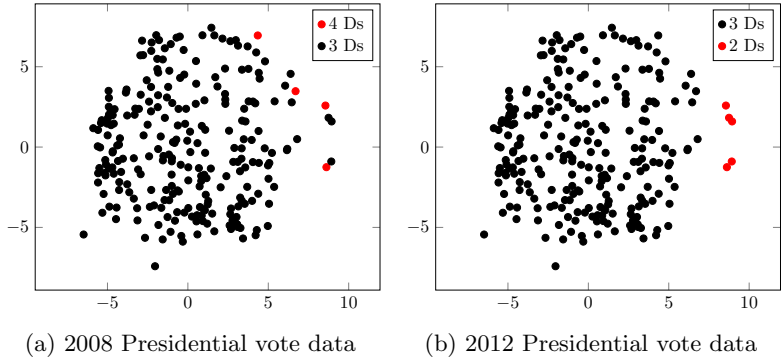
\begin{figure}
\centering
\begin{subfigure}{0.4\textwidth}
\centering
\resizebox{\columnwidth}{!}{
\begin{tikzpicture}
	\begin{axis}[axis equal]
	\addplot[color=red, mark=*, only marks] table[x=X, y=Y, col sep=comma]{experiments/iowa/four_seats_2008.csv};
	\addlegendentry{4 Ds}
	\addplot[color=black, mark=*, only marks] table[x=X, y=Y, col sep=comma]{experiments/iowa/three_seats_2008.csv};
	\addlegendentry{3 Ds}
	\end{axis}
\end{tikzpicture}
}
\caption{2008 Presidential vote data}
\label{fig:iowa08}
\end{subfigure}
\begin{subfigure}{0.4\textwidth}
\centering
\resizebox{\columnwidth}{!}{
\begin{tikzpicture}
	\begin{axis}[axis equal]
	\addplot[color=black, mark=*, only marks] table[x=X, y=Y, col sep=comma]{experiments/iowa/three_seats_2012.csv};
	\addlegendentry{3 Ds}
	\addplot[color=red, mark=*, only marks] table[x=X, y=Y, col sep=comma]{experiments/iowa/two_seats_2012.csv};
	\addlegendentry{2 Ds}
	\end{axis}
\end{tikzpicture}
}
\caption{2012 Presidential vote data}
\label{fig:iowa12}
\end{subfigure}
\caption{Iowa ensemble colored by Democratic seats won.}
\end{figure}

\begin{figure}
\centering
\begin{tabular}{@{}c@{}c@{}c@{}c@{}c@{}}
\includegraphics[width=.2\textwidth]{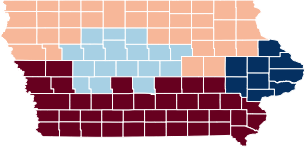}&
\includegraphics[width=.2\textwidth]{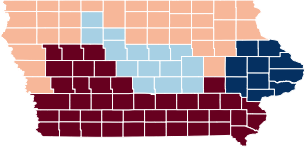}&
\includegraphics[width=.2\textwidth]{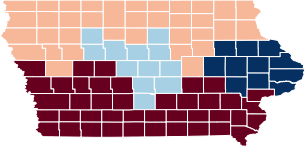}&
\includegraphics[width=.2\textwidth]{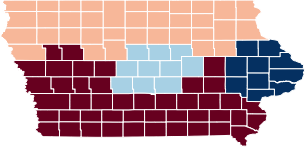}&
\includegraphics[width=.2\textwidth]{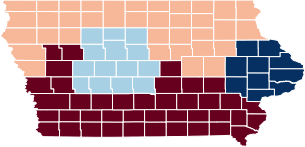}
\end{tabular}
    \caption{Partitions of Iowa with two Democratic seats under 2012 Presidential election results.  The two blue districts have a Democratic majority and the two red districts have a Republican majority.}
\label{fig:iowamaps}
\end{figure}

\subsection{Outlier analysis}\label{sec:nc_ensemble}
In \cite{Mattingly2018}, the authors demonstrate that the congressional districting plans enacted in 2012 and in 2016 were atypical outliers in terms of their partisan statistics (computed using fixed vote data from multiple elections), relative to a computer-generated ensemble of plans. A third plan proposed by a bipartisan panel of judges, on the other hand, was found to be far more representative. \Cref{fig:MattinglyMDS} shows an MDS embedding of one hundred plans drawn from the the authors' ensemble as well as the three human-drawn plans mentioned above, using our distance. In \cref{fig:ReComMDS}, we show the same plot but with an ensemble generated by a `recombination' Markov chain.

The judges' plan lies near the middle of the ensemble in both cases, whereas the 2012 and 2016 plans lie on the edge. This indicates that the judges' plan is far more representative of the ensemble than the 2012 and 2016 plan in terms of its geography. The authors in \cite{Mattingly2018} show that the judges' plan is \textit{politically} representative of the ensemble and here we see that it is \textit{geographically} representative as well. 

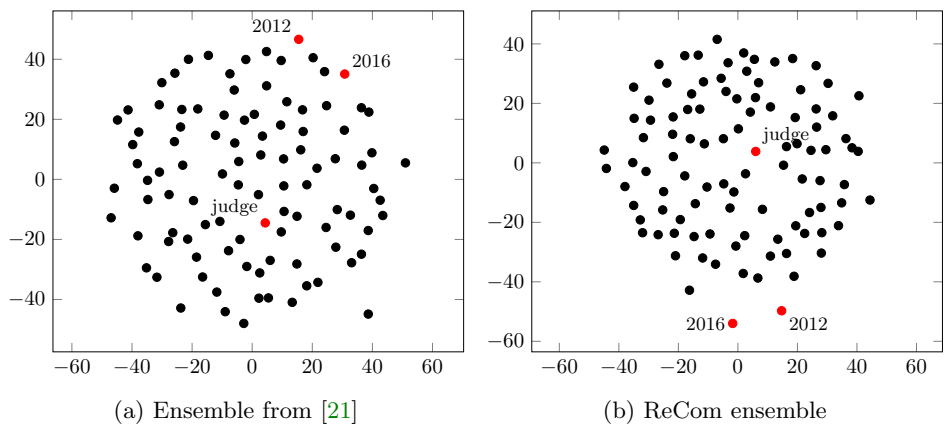
\begin{figure}
\begin{subfigure}{0.48\textwidth}
\centering
\resizebox{\columnwidth}{!}{%
\begin{tikzpicture}[scale=1]
\begin{axis}[scatter/classes={a={red},b={black}}, axis equal]
\addplot[scatter, only marks, scatter src=explicit symbolic] table[meta=label, x=X, y=Y, col sep=comma]{experiments/Mattingly/MattinglyMDS.csv}
node[pos=0, above left, text=black]{\small judge}
node[pos=0.01, above left, text=black]{\small 2012}
node[pos=0.02, above right, text=black]{\small 2016};
\end{axis}
\end{tikzpicture}
}
\caption{Ensemble from \cite{Mattingly2018}}
\label{fig:MattinglyMDS}
\end{subfigure}
\begin{subfigure}{0.48\textwidth}
\centering
\resizebox{\columnwidth}{!}{%
\begin{tikzpicture}[scale=1]
\begin{axis}[scatter/classes={a={red},b={black}}, axis equal]
\addplot[scatter, only marks, scatter src=explicit symbolic] table[meta=label, x=X, y=Y, col sep=comma]{experiments/Mattingly/RecomXY.csv}
node[pos=0, above right, text=black]{\small judge}
node[pos=0.01, below right, text=black]{\small 2012}
node[pos=0.02, left, text=black]{\small 2016};
\end{axis}
\end{tikzpicture}
}
\caption{ReCom ensemble}
\label{fig:ReComMDS}
\end{subfigure}
\caption{Transport embeddings of ensembles and human-drawn plans for North Carolina.}
\end{figure}

\section{Conclusion}\label{sec:conclusion}

Through a straightforward construction, we demonstrate 
how optimal transport---already a lifting of a geometric structure to the set of probability measures---can be further lifted to a geometry on partitions.  Our definition as a transport problem whose cost function is itself the result of solving a transport problem is an intriguing example of \emph{hierarchical} transport in its own right. We demonstrate that several intuitve and classical results about transport and network flow apply in this lifted setting. Moreover, by restricting this distance to the space of partitions rather than general (unordered) collections of measures, we are able to derive some specialized results.  

Relatively few works have put a geometry on the space of partitions, and our progress on this problem suggests several avenues for future research.  We mention a few below:
\begin{itemize}
    \item A well-known challenge in redistricting is the sheer number of ways to partition a graph, which obstructs global analysis of a districting plan relative to all possible alternatives.  By putting a geometry on the space of partitions, we can ask whether the combinatorial count of partitions is truly insurmountable, or if there is a ``small neighborhood'' phenomenon whereby most partitions are close to a relatively small spanning subset of representatives.
    \item While our construction puts an intuitive and interpretable distance on the space of partitions, it can be expensive to compute relative to the more naive Hamming distances and total variation distances (see \cref{section:bounds}), which have far weaker geometric behavior but are often very easy to compute.  It may be the case that alternatives exist with a better compromise between computational efficiency and expressiveness.
    \item Motivated by our intended applications in redistricting, the constructions in this paper are discrete and restricted to the 1-Wasserstein distance.  Our definition readily generalizes to partitions of compact regions in $\R^n$, although the underlying computational problem becomes much more challenging.
\end{itemize}

Ultimately, as demonstrated in \cref{sec:experiments}, our distance is not only a valuable mathematical construction but also---perhaps more importantly---a practical tool needed in emerging applications of data analysis to political science.  Equipped with our distance and embedding algorithms, we can quickly navigate and judge the extent of a collection of partitions, addressing a significant gap in current methodologies for ensemble-based redistricting.

\section*{Acknowledgments} The authors acknowledge the generous support and collaboration of several colleagues.  In addition to the authors, the original team at the Voting Rights Data Institute (VRDI) working on distances between districting plans included Kristen Akey, Sonali Durham, Maira Khan, Jasmine Noory, Gabe Schoenbach, and M\'elisande Teng.  We additionally thank Ruth Buck, Sebastian Claici, Daryl DeFord, Moon Duchin, Lorenzo Najt, David Palmer, and Paul Zhang for valuable discussions throughout the research process.  J.\ Solomon acknowledges the generous support of NSF grant IIS-1838071,  Air Force Office of Scientific Research award FA9550-19-1-0319, Army Research Office grant W911NF-12-R-001, and the Prof.\ Amar G.\ Bose Research Grant.  N.\ Guillen acknowledges the generous support of NSF Grant DMS-1700307. Any opinions, findings, and conclusions or recommendations expressed in this material are those of the authors and do not necessarily reflect the views of these organizations.

\bibliographystyle{siamplain}
\bibliography{references}

\begin{thebibliography}{10}

\bibitem{ahuja1988network}
{\sc R.~K. Ahuja, T.~L. Magnanti, and J.~B. Orlin}, {\em Network Flows}, 1988.

\bibitem{bangia2017redistricting}
{\sc S.~Bangia, C.~V. Graves, G.~Herschlag, H.~S. Kang, J.~Luo, J.~C.
  Mattingly, and R.~Ravier}, {\em Redistricting: Drawing the line},
  arXiv:1704.03360,  (2017).

\bibitem{beckmann1952continuous}
{\sc M.~Beckmann}, {\em A continuous model of transportation}, Econometrica:
  Journal of the Econometric Society,  (1952), pp.~643--660.

\bibitem{borg2012applied}
{\sc I.~Borg, P.~J. Groenen, and P.~Mair}, {\em Applied Multidimensional
  Scaling}, Springer Science \& Business Media, 2012.

\bibitem{caffarelli2010free}
{\sc L.~A. Caffarelli and R.~J. McCann}, {\em Free boundaries in optimal
  transport and {M}onge-{A}mp\`ere obstacle problems}, Ann. of Math. (2), 171
  (2010), pp.~673--730, \url{https://doi.org/10.4007/annals.2010.171.673}.

\bibitem{chen2015cutting}
{\sc J.~Chen and J.~Rodden}, {\em Cutting through the thicket: Redistricting
  simulations and the detection of partisan gerrymanders}, Election Law
  Journal, 14 (2015), pp.~331--345.

\bibitem{chen2013unintentional}
{\sc J.~Chen, J.~Rodden, et~al.}, {\em Unintentional gerrymandering: Political
  geography and electoral bias in legislatures}, Quarterly Journal of Political
  Science, 8 (2013), pp.~239--269.

\bibitem{chikina2017assessing}
{\sc M.~Chikina, A.~Frieze, and W.~Pegden}, {\em Assessing significance in a
  {M}arkov chain without mixing}, Proceedings of the National Academy of
  Sciences, 114 (2017), pp.~2860--2864.

\bibitem{chizat2018scaling}
{\sc L.~Chizat, G.~Peyr{\'e}, B.~Schmitzer, and F.-X. Vialard}, {\em Scaling
  algorithms for unbalanced optimal transport problems}, Mathematics of
  Computation, 87 (2018), pp.~2563--2609.

\bibitem{cuturi2014ground}
{\sc M.~Cuturi and D.~Avis}, {\em Ground metric learning}, The Journal of
  Machine Learning Research, 15 (2014), pp.~533--564.

\bibitem{day1981complexity}
{\sc W.~H. Day}, {\em The complexity of computing metric distances between
  partitions}, Mathematical Social Sciences, 1 (1981), pp.~269--287.

\bibitem{deford2019redistricting}
{\sc D.~DeFord and M.~Duchin}, {\em Redistricting reform in {V}irginia:
  Districting criteria in context}, Virginia Policy Review,  (2019).

\bibitem{deford2019recom}
{\sc D.~DeFord, M.~Duchin, and J.~Solomon}, {\em Recombination: A family of
  {M}arkov chains for redistricting}, Submitted,  (2019).

\bibitem{denoeud2006comparison}
{\sc L.~Den{\oe}ud and A.~Gu{\'e}noche}, {\em Comparison of distance indices
  between partitions}, in Data Science and Classification, Springer, 2006,
  pp.~21--28.

\bibitem{cvxpy}
{\sc S.~Diamond and S.~Boyd}, {\em {CVXPY}: A {P}ython-embedded modeling
  language for convex optimization}, Journal of Machine Learning Research, 17
  (2016), pp.~1--5.

\bibitem{essid2018quadratically}
{\sc M.~Essid and J.~Solomon}, {\em Quadratically regularized optimal transport
  on graphs}, SIAM Journal on Scientific Computing, 40 (2018),
  pp.~A1961--A1986.

\bibitem{figalli2010transportation}
{\sc A.~Figalli and N.~Gigli}, {\em A new transportation distance between
  non-negative measures, with applications to gradients flows with {D}irichlet
  boundary conditions}, J. Math. Pures Appl. (9), 94 (2010), pp.~107--130,
  \url{https://doi.org/10.1016/j.matpur.2009.11.005}.

\bibitem{galichon2018optimal}
{\sc A.~Galichon}, {\em Optimal transport methods in economics}, Princeton
  University Press, 2018.

\bibitem{guillen2019coupling}
{\sc N.~Guillen, C.~Mou, and A.~{\'S}wiȩch}, {\em Coupling {L}{\'e}vy measures
  and comparison principles for viscosity solutions}, Transactions of the
  American Mathematical Society,  (2019).

\bibitem{nc_data}
{\sc G.~Herschlag}, {\em gjh/districtingdatarepository}, September 2019,
  \url{http://git.math.duke.edu/gitlab/gjh/districtingDataRepository}.

\bibitem{Mattingly2018}
{\sc G.~Herschlag, H.~S. Kang, J.~Luo, C.~V. Graves, S.~Bangia, R.~Ravier, and
  J.~C. Mattingly}, {\em Quantifying gerrymandering in {N}orth {C}arolina},
  arXiv:1801.03783,  (2018).

\bibitem{herschlag2017evaluating}
{\sc G.~Herschlag, R.~Ravier, and J.~C. Mattingly}, {\em Evaluating partisan
  gerrymandering in {W}isconsin}, arXiv:1709.01596,  (2017).

\bibitem{kantorovich1942translocation}
{\sc L.~V. Kantorovich}, {\em On the translocation of masses}, in Dokl. Akad.
  Nauk. USSR (NS), vol.~37, 1942, pp.~199--201.

\bibitem{leonardi2002metric}
{\sc G.~P. Leonardi and I.~Tamanini}, {\em Metric spaces of partitions, and
  {C}accioppoli partitions}, Adv. Math. Sci. Appl., 12 (2002), pp.~725--753.

\bibitem{lombardi2015eulerian}
{\sc D.~Lombardi and E.~Maitre}, {\em Eulerian models and algorithms for
  unbalanced optimal transport}, ESAIM Math. Model. Numer. Anal., 49 (2015),
  pp.~1717--1744, \url{https://doi.org/10.1051/m2an/2015025}.

\bibitem{manson2017ipums}
{\sc S.~Manson, J.~Schroeder, D.~Van~Riper, S.~Ruggles, et~al.}, {\em {IPUMS}
  national historical geographic information system: Version 12.0 [database]},
  Minneapolis: University of Minnesota,  (2017), p.~39.

\bibitem{meilua2007comparing}
{\sc M.~Meil{\u{a}}}, {\em Comparing clusterings---an information based
  distance}, Journal of Multivariate Analysis, 98 (2007), pp.~873--895.

\bibitem{gerrychain}
{\sc {Metric Geometry and Gerrymandering Group}}, {\em mggg/gerrychain:
  v0.2.12}, July 2019, \url{https://github.com/mggg/gerrychain}.

\bibitem{mggg_states}
{\sc {Metric Geometry and Gerrymandering Group} and R.~Buck}, {\em
  mggg-states}, September 2019, \url{https://github.com/mggg-states}.

\bibitem{monge1781memoire}
{\sc G.~Monge}, {\em M{\'e}moire sur la th{\'e}orie des d{\'e}blais et des
  remblais}, Histoire de l'Acad{\'e}mie Royale des Sciences de Paris,  (1781).

\bibitem{najt2019complexity}
{\sc L.~Najt, D.~DeFord, and J.~Solomon}, {\em Complexity and geometry of
  sampling connected graph partitions}, arXiv:1908.08881,  (2019).

\bibitem{scikit-learn}
{\sc F.~Pedregosa, G.~Varoquaux, A.~Gramfort, V.~Michel, B.~Thirion, O.~Grisel,
  M.~Blondel, P.~Prettenhofer, R.~Weiss, V.~Dubourg, J.~Vanderplas, A.~Passos,
  D.~Cournapeau, M.~Brucher, M.~Perrot, and E.~Duchesnay}, {\em Scikit-learn:
  Machine learning in {P}ython}, Journal of Machine Learning Research, 12
  (2011), pp.~2825--2830.

\bibitem{peyre2019computational}
{\sc G.~Peyr{\'e} and M.~Cuturi}, {\em Computational optimal transport},
  Foundations and Trends in Machine Learning, 11 (2019), pp.~355--607.

\bibitem{santambrogio2013prescribed}
{\sc F.~Santambrogio}, {\em Prescribed-divergence problems in optimal
  transportation}, MSRI lecture notes,  (2013).

\bibitem{santambrogio2015optimal}
{\sc F.~Santambrogio}, {\em Optimal transport for applied mathematicians},
  Birk{\"a}user, NY, 55 (2015), pp.~58--63.

\bibitem{schutzman2019enumerator}
{\sc Z.~Schutzman}, {\em zschutzman/enumerator: v0.1.5}, 10 2019,
  \url{https://github.com/zschutzman/enumerator}.

\bibitem{slater1950lagrange}
{\sc L.~Slater~Morton}, {\em Lagrange multipliers revisited}, CCDP Mathematics,
  403 (1950).

\bibitem{villani2003topics}
{\sc C.~Villani}, {\em Topics in Optimal Transportation}, no.~58 in Graduate
  Studies in Mathematics, American Mathematical Soc., 2003.

\bibitem{young1988compact}
{\sc H.~P. Young}, {\em Measuring the compactness of legislative districts},
  Legislative Studies Quarterly, 13 (1988), pp.~105--115,
  \url{http://www.jstor.org/stable/439947}.

\bibitem{yurochkin2019hierarchical}
{\sc M.~Yurochkin, S.~Claici, E.~Chien, F.~Mirzazadeh, and J.~Solomon}, {\em
  Hierarchical optimal transport for document representation}, Proc.\ NeurIPS,
  (2019).

\end{thebibliography}

\end{document}